\theoremstyle{plain}
\newtheorem{thm}{Theorem}[section]%写定理
\newtheorem{proposition}[thm]{Proposition}
\newtheorem{exam}[thm]{Example}
\newtheorem{obser}[thm]{Observation}
\newtheorem{lemma}[thm]{Lemma}
\theoremstyle{definition}%定义格式
\newtheorem{defn}[thm]{Definition}
\theoremstyle{remark}%注释格式
\newtheorem{rmk}[thm]{Remark}
\newcommand{\Rmnum}[1]{\expandafter\@slowromancap\romannumeral #1@}
\thanks{The first author is supported by NSF of China No.11671057 and No.11688101}
\begin{document}
\title{The CAT(0) geometry of convex domains with the Kobayashi metrics}
\author{Jinsong Liu $\&$ Hongyu Wang}
\address{HLM, Academy of Mathematics and Systems Science,
Chinese Academy of Sciences, Beijing, 100190, China $\&$ School of
Mathematical Sciences, University of Chinese Academy of Sciences,
Beijing, 100049, China} \email{liujsong@math.ac.cn \:wanghongyu16@mails.ucas.ac.cn}

\begin{abstract} {Let $(\Omega,K_{\Omega})$ be a convex domain in $\mathbb C^d$ with the Kobayashi metric $K_{\Omega}$. In this paper we prove that $m$-convexity is a necessary condition for $(\Omega, K_{\Omega})$ to be CAT(0) if $d=2$. Moreover, when $\Omega \subset \mathbb{C}^d, \: d\geq3$, we obtain a similar result with the further smoothness assumption on its boundary. }
\end{abstract}
\maketitle
\section{\noindent{{\bf Introduction}}}
A CAT(0) space is a geodesic metric space whose geodesic triangles are “slimmer”
than the corresponding flat triangles in the Euclidean plane $\mathbb{R}^2$. CAT(0) spaces are natural generalizations of complete simply connected manifolds with nonpositive sectional curvature. Refer to \cite{Bridson} for more details.

\bigskip
Our main result is the following.
\begin{thm}\label{main}
(1) \: If $\Omega\subset \mathbb{C}^2$ is a $\mathbb{C}$-proper convex domain and $(\Omega,K_{\Omega})$ is CAT(0), then $\Omega$ is locally m-convex for some $m\in\mathbb{N}$.

(2) \: Suppose that $\Omega\subset \mathbb{C}^d\: (d\geq2)$ is a bounded convex domain with smooth boundary. If $(\Omega, \:K_{\Omega})$ is CAT(0), then $\partial\Omega$ has finite line type.
\end{thm}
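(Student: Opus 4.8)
My plan is to argue by contradiction, using the affine scaling (``blow-up'') method for the Kobayashi geometry of convex domains together with the fact that the CAT(0) property is inherited by pointed Gromov--Hausdorff limits of complete geodesic metric spaces. So suppose $(\Omega,K_\Omega)$ is CAT(0) while the conclusion fails. In case (2) this means $\partial\Omega$ has infinite line type, so there are a point $p\in\partial\Omega$ and a complex affine line through $p$ whose order of contact with $\partial\Omega$ at $p$ is infinite; in case (1) the failure of local $m$-convexity for every $m$ furnishes, for a $\mathbb{C}$-proper convex domain in $\mathbb{C}^2$, the analogous datum: a boundary point along which $\Omega$ is flat to infinite order in some complex direction.

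The first step is the rescaling. After an affine normalization sending $p$ to $0$ and placing a supporting real hyperplane and the flat complex line $\ell$ in standard position, I would take interior points $q_n\to p$ approaching non-tangentially and affine automorphisms $\Lambda_n$ of $\mathbb{C}^d$ with $\Lambda_n(q_n)=0$, with dilation rates adapted to $\partial\Omega$ near $p$: strong stretching in the directions transverse to $\ell$, normalized so that the Hessian-type part of the local defining function survives, and only mild rescaling along $\ell$. Since $\partial\Omega$ is infinitely flat along $\ell$ at $p$, the coupling between $\ell$ and its transverse complement disappears in the limit, and $\Omega_n:=\Lambda_n(\Omega)$ converges in the local Hausdorff sense to a convex ``model'' domain $\widehat\Omega$ which splits off one or more complex-linear factors. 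In the smooth case (2) with $d\ge3$ one exploits the regularity of $\partial\Omega$ to keep the transverse normalization under control and, having enough transverse complex directions available, arranges that the surviving factor is bidisc-like, i.e.\ that $\widehat\Omega$ contains an isometrically embedded copy of the bidisc $\mathbb{D}^2$ with its Kobayashi metric (equivalently, of the supremum-product of two copies of the Poincar\'e disc).

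Second, I would transfer CAT(0) to the model. Each $\Omega_n$ is biholomorphic to $\Omega$, hence $(\Omega_n,K_{\Omega_n})$ is isometric to $(\Omega,K_\Omega)$ and so CAT(0); for convex domains converging in the local Hausdorff sense the Kobayashi distances converge locally uniformly, and, together with Lempert's theorem on complex geodesics, this yields pointed Gromov--Hausdorff convergence of the associated metric spaces, so the metric space underlying $(\widehat\Omega,K_{\widehat\Omega})$ is CAT(0). But this is a contradiction: $\mathbb{D}^2$ with the Kobayashi metric is not uniquely geodesic --- for $x\in(0,1)$ there are infinitely many geodesics joining $(0,0)$ to $(x,0)$ --- whereas CAT(0) spaces are uniquely geodesic, and the bidisc sits inside $\widehat\Omega$ in a non-collapsing way. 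This proves the smooth statement (2), and comparing with the known equivalence of Gromov hyperbolicity with finite line type for such domains locates the dichotomy precisely. For case (1) the dimension $d=2$ leaves no room for the surviving factor to itself carry two hyperbolic directions, so instead one uses the $\mathbb{C}$-properness of $\Omega$: the scaled limit cannot contain a complex affine line in its interior without the limiting metric degenerating, so the infinite order of contact must instead appear as an honest line segment in $\partial\widehat\Omega$, which again forces non-uniqueness of geodesics and contradicts CAT(0); the low dimension is exactly what makes this two-dimensional slicing argument go through without a smoothness hypothesis.

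The crux of the whole argument is the design of the scaling so that the limit genuinely fails CAT(0) \emph{as a metric space}. A bare $\mathbb{C}$-factor is harmless --- its associated metric space is a complex hyperbolic space, which is CAT$(-1)$, hence CAT(0) --- so one must extract an honest supremum-product of two nontrivial hyperbolic-type factors (in the smooth higher-dimensional case) or an honest segment in the boundary of the limit (in the two-dimensional case), while at the same time keeping the convergence non-collapsing so that CAT(0) really transfers. Obtaining both of these from the single hypothesis of infinite order of contact is the main technical difficulty: the smoothness assumption in dimension $\ge3$ is what makes the transverse normalization and the local-Hausdorff convergence work, and the restriction to $\mathbb{C}^2$ in part (1) is what allows $\mathbb{C}$-properness together with a direct two-dimensional argument to substitute for it.
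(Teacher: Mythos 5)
Your skeleton matches the paper's: rescale at a point where local $m$-convexity (or finite line type) fails, obtain a limit domain containing a product region $\mathbb{H}\times\Delta$ (resp.\ a cone times a disc), and contradict CAT(0) using the max-product behaviour of the Kobayashi metric. But the step that carries the contradiction is genuinely gapped. You assert that $\widehat\Omega$ contains an \emph{isometrically embedded} bidisc. The scaling only yields the containment $\mathbb{H}\times\Delta\times\{\vec 0\}\subseteq\widehat\Omega$, i.e.\ the upper bound $K_{\widehat\Omega}\le K_{\mathbb{H}\times\Delta}$ on that set; a matching lower bound would require holomorphic retractions of $\widehat\Omega$ onto \emph{both} factors, and only the first exists (the projection $P(z)=z_1$, and only after one proves $P(\widehat\Omega)=\mathbb{H}$ --- itself a nontrivial claim, which is exactly where the convexity argument in $d=2$ and the smoothness/Frankel argument in $d\ge 3$ enter, and which your sketch does not address). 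There is no projection onto the $\Delta$ factor, so the bidisc need not sit isometrically in $\widehat\Omega$ and your non-uniqueness of geodesics does not follow as stated. The repair --- and what the paper actually does --- is to note that the upper bound from the containment together with the lower bound from the single projection already force $\hat x=(x,0)$ and $\hat y=(y,0)$ to have many midpoints (every $(m,z)$ with $K_{\Delta}(0,z)\le\tfrac12 K_{\mathbb{H}}(x,y)$ is one), which violates the midpoint characterization of Theorem \ref{mid} without any full bidisc embedding. Relatedly, your treatment of part (1) via ``an honest line segment in $\partial\widehat\Omega$'' is off target: what the scaling (Theorem \ref{scaling}) produces is a complex affine disk in the boundary, and it is the resulting cone-times-disc structure, not a real segment, that defeats CAT(0); a real segment in the boundary has no such consequence for the Kobayashi metric.

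The second gap is the transfer of CAT(0) to the limit. Theorem \ref{hausdorff} gives locally uniform convergence of $K_{\Omega_n}$ on compacta of $\widehat\Omega\times\widehat\Omega$, which is only half of pointed Gromov--Hausdorff convergence; you also need that points of $\Omega_n$ at bounded Kobayashi distance from the basepoint do not escape every Euclidean compact set, and neither your sketch nor Lempert's theorem supplies that. The paper sidesteps this entirely: it writes the midpoint-defect inequality for finitely many explicit points of $\widehat\Omega$ (using two candidates $\hat z=(m,z)$ and $\check z=(m,iz)$ to guard against the actual midpoints $m_n$ accumulating on one of them), transfers those finitely many distances back to $\Omega_n$ by Theorem \ref{hausdorff}, and contradicts Theorem \ref{mid} in $(\Omega_n,K_{\Omega_n})$ itself, where CAT(0) is known by hypothesis. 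You should either argue at the level of $\Omega_n$ in the same way or prove the non-escape estimate needed for genuine pointed Gromov--Hausdorff convergence.
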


Recall that a convex domain is called $\mathbb{C}$-proper if $\Omega$ does not contain any entire complex affine lines. There is a well-known result on $\mathbb{C}$-proper convex domains.
\begin{proposition}[\cite{Barth1980Convex}]
If $\Omega$ is a $\mathbb{C}$-proper convex domain in $\mathbb{C}^d$, then the Kobayashi metric $K_{\Omega}$ is complete.
\end{proposition}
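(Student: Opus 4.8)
The plan is to show that $(\Omega,K_{\Omega})$ is \emph{complete} by verifying the standard criterion that every closed Kobayashi ball $\overline{B}_{d_\Omega}(p_0,R)$ is a compact subset of $\Omega$; here $d_\Omega$ denotes the integrated Kobayashi distance and $p_0\in\Omega$ is a fixed base point. Since $d_\Omega$ is continuous for the Euclidean topology, a $d_\Omega$-Cauchy sequence converges in $\Omega$ as soon as it is confined to a compact subset of $\Omega$, so it suffices to rule out two kinds of escape for a $d_\Omega$-bounded sequence $(p_n)$: (i) $p_n$ approaching $\partial\Omega$, and (ii) $p_n\to\infty$ when $\Omega$ is unbounded. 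Both will be controlled by one lower bound on $d_\Omega$ obtained from complex-linear supporting functionals.

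First I would establish the boundary estimate. Write $\delta_\Omega(z)=\operatorname{dist}(z,\partial\Omega)$. Given $p\in\Omega$, choose a nearest boundary point $q\in\partial\Omega$ with $|p-q|=\delta_\Omega(p)$ and let $\nu$ be the outer unit normal of a real supporting hyperplane of the convex set $\Omega$ at $q$. Then $\ell(z):=\langle z-q,\nu\rangle$ is a complex affine functional with $\operatorname{Re}\ell<0$ on $\Omega$, so $\ell\colon\Omega\to\mathbb{H}^-:=\{\operatorname{Re} w<0\}$ is holomorphic into a domain biholomorphic to the disc. Using $\operatorname{Re}\ell(p)=-\delta_\Omega(p)$, the inclusion $B(p_0,\delta_0)\subset\Omega\subset\{\operatorname{Re}\ell<0\}$ (which forces $|\operatorname{Re}\ell(p_0)|\ge\delta_0$, where $\delta_0:=\delta_\Omega(p_0)$), the distance-decreasing property, and the explicit metric of $\mathbb{H}^-$, I obtain
$$
d_\Omega(p_0,p)\ \ge\ d_{\mathbb{H}^-}\bigl(\ell(p_0),\ell(p)\bigr)\ \ge\ \tfrac12\Bigl|\log\tfrac{|\operatorname{Re}\ell(p_0)|}{|\operatorname{Re}\ell(p)|}\Bigr|\ \ge\ \tfrac12\log\frac{\delta_0}{\delta_\Omega(p)},
$$
the last inequality being the relevant one when $\delta_\Omega(p)\le\delta_0$. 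Rearranging gives $\delta_\Omega(p)\ge\delta_0\,e^{-2d_\Omega(p_0,p)}$, so points of $\overline{B}_{d_\Omega}(p_0,R)$ stay at Euclidean distance at least $\delta_0 e^{-2R}$ from $\partial\Omega$; this rules out escape (i).

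Next I would rule out escape to infinity, and this is exactly where $\mathbb{C}$-properness enters. A single functional as above only maps $\Omega$ into a half-space $\{\operatorname{Re}\ell<0\}\subset\mathbb{C}^d$, which for $d\ge2$ contains complex lines and hence cannot detect escape to infinity. The key claim is that, because $\Omega$ contains no complex affine line, its recession cone contains no complex line, and consequently one can select finitely many complex-linear supporting functionals $\ell_1,\dots,\ell_N$ with $\operatorname{Re}\ell_j<0$ on $\Omega$ so that the map $L=(\ell_1,\dots,\ell_N)\colon\Omega\to(\mathbb{H}^-)^N$ is proper. Granting this, $(\mathbb{H}^-)^N$ is complete hyperbolic, so $d_{(\mathbb{H}^-)^N}\bigl(L(p_0),L(p)\bigr)\to\infty$ whenever $L(p)$ leaves compact sets, and properness together with the distance-decreasing inequality $d_\Omega(p_0,p)\ge d_{(\mathbb{H}^-)^N}\bigl(L(p_0),L(p)\bigr)$ forces $d_\Omega(p_0,p)\to\infty$ as $p\to\infty$ in $\Omega$. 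Thus escape (ii) is impossible as well, and combining the two steps $\overline{B}_{d_\Omega}(p_0,R)$ is closed, bounded, and bounded away from $\partial\Omega$, hence compact in $\Omega$, so completeness follows.

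The main obstacle is the construction in the third step: extracting from the qualitative hypothesis ``no complex line'' a finite family of complex supporting functionals whose joint image map is proper. Convexity supplies real supporting hyperplanes for free, but each gives only a half-space model, and one must verify that the $\mathbb{C}$-properness of the recession cone is precisely the condition allowing the kernels of the $\ell_j$ to have trivial common complex direction, so that $L$ controls all directions of escape to infinity simultaneously. This is the step that genuinely separates completeness from mere non-degeneracy of $K_\Omega$, and it is where the hypothesis is used in an essential way.
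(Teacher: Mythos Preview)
The paper does not prove this proposition at all: it is quoted from Barth (1980) and stated without proof, so there is no ``paper's own proof'' to compare against.

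Your proposal is essentially a correct reconstruction of the classical argument. Step~(i) is fine as written. For step~(ii), the point you flag as the main obstacle can be resolved as follows. A real supporting hyperplane of $\Omega$ has the form $\{\operatorname{Re} L=c\}$ for a $\mathbb{C}$-linear functional $L$, and $\operatorname{Re} L$ is bounded above on $\Omega$ iff $\operatorname{Re} L\le 0$ on the recession cone $\mathrm{rec}(\Omega)$. If the family of such $L$ failed to span $(\mathbb{C}^d)^*$, there would exist $v\neq 0$ with $L(v)=0$ for every supporting $L$; then both $v$ and $iv$ lie in the lineality space $\mathrm{rec}(\Omega)\cap(-\mathrm{rec}(\Omega))$, hence $\mathbb{C}v$ is contained in that real subspace and $\Omega$ contains a complex affine line, contradicting $\mathbb{C}$-properness. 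Thus one can pick $\mathbb{C}$-linearly independent supporting functionals $L_1,\dots,L_d$, and after translation $L=(L_1,\dots,L_d)$ is a $\mathbb{C}$-linear isomorphism sending $\Omega$ into $(\mathbb{H}^-)^d$; being a linear isomorphism it is proper on $\mathbb{C}^d$, hence $|p_n|\to\infty$ forces $L(p_n)$ to leave every compact subset of $(\mathbb{H}^-)^d$, and completeness of the polydisc metric gives $d_\Omega(p_0,p_n)\ge d_{(\mathbb{H}^-)^d}(L(p_0),L(p_n))\to\infty$. Combined with your boundary estimate this shows every closed Kobayashi ball is compact in $\Omega$, and completeness follows.
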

A $\mathbb{C}$-proper convex domain is called locally m-convex if for any $R>0$ there exists $C>0$ and $m\geq 1$ such that, for all $z\in B(0,R)\cap \Omega$ and non-zero $v\in \mathbb{C}^d$,
$$\delta_{\Omega}(z,v)\leq C\delta_{\Omega}^{\frac{1}{m}}(z).$$

Note that $m$-convexity is related to finite type by the following proposition.
\begin{proposition}[\cite{Zimmer2016Gromov}, \:Proposition 9.1]
Given a bounded convex domain $\Omega$ with smooth boundary, then $\Omega$ is $m$-convex for some $m\in\mathbb{N}$ if and only if $\partial\Omega$ has finite line type in the sense of D'Angelo.
\end{proposition}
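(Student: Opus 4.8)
The plan is to prove both implications by relating the quantity $\delta_{\Omega}(z,v)\big/\delta_{\Omega}(z)^{1/m}$ from the definition of $m$-convexity to the order of contact of complex affine lines with $\partial\Omega$, using the Taylor expansion of a smooth convex defining function $\rho$ (with $\Omega=\{\rho<0\}$ and $|\nabla\rho|\equiv 1$ on $\partial\Omega$) at the boundary point $p$ nearest to $z$. Since $\Omega$ is bounded, $B(0,R)\cap\Omega=\Omega$ for $R$ large, and whenever $z$ lies at a fixed positive distance from $\partial\Omega$ both inequalities hold trivially ($\delta_{\Omega}(z)$ is bounded below and $\delta_{\Omega}(z,v)$ above); hence only the regime $z\to\partial\Omega$ matters, and there I write $s:=\delta_{\Omega}(z)$, let $\nu_p$ be the inner unit normal at the nearest point $p$, so $z=p+s\nu_p$. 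Two elementary consequences of convexity are used repeatedly: a complex line has contact order $\ge 2$ with $\partial\Omega$ at $p$ precisely when its direction is complex-tangential there (the $\zeta$-linear part of $\rho(p+\zeta v)$ equals $2\,\mathrm{Re}\bigl(\partial\rho(p)(v)\,\zeta\bigr)$, which vanishes identically iff $\partial\rho(p)(v)=0$), and for such lines $\rho(p+\zeta v)\ge 0$, so — by smoothness and nonnegativity of the leading homogeneous part — the contact order is a finite even integer $2j$ or is $+\infty$.

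For ``finite line type $\Rightarrow$ $m$-convex'' the first and decisive step is to upgrade the pointwise hypothesis to a uniform one. Upper semicontinuity of the order of contact together with compactness of $\partial\Omega$ produces a single even integer $2m$ bounding all line types, and — more to the point — a single constant $C$ with $\delta_{\Omega}(q+t\nu_q,u)\le C\,t^{1/2m}$ for all $q\in\partial\Omega$, all unit complex-tangential $u$ at $q$, and all small $t>0$. (This is where smoothness enters essentially; it is a genuine estimate on the convex geometry near $\partial\Omega$ of the kind made precise by McNeal, and does \emph{not} reduce to a naive lower bound $\rho(q+\zeta u)\ge c|\zeta|^{2m}$: the constant in such a bound degenerates at the less flat boundary points, but there $\partial\Omega$ curves away faster and $\delta_{\Omega}$ is correspondingly smaller, so the two effects balance.) Granting this, expand $\rho$ at $p$ along $\zeta\mapsto z+\zeta v=p+s\nu_p+\zeta v$: when $v$ is complex-tangential at $p$ the uniform estimate gives $\delta_{\Omega}(z,v)\lesssim s^{1/2m}$ directly, while when $v$ has a nonzero complex-normal component the $\zeta$-linear term has size $\gtrsim|\zeta|$ and yields the stronger $\delta_{\Omega}(z,v)\lesssim s$; a routine interpolation covers the directions in between. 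Absorbing constants (using boundedness of $\Omega$) gives $\delta_{\Omega}(z,v)\le C'\,\delta_{\Omega}(z)^{1/2m}$ for all $z$ and $v$, i.e.\ $\Omega$ is $m$-convex.

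The converse is proved by contraposition. Suppose $\partial\Omega$ has infinite line type at some $p$ and fix an arbitrary $m'\in\mathbb N$. Then there is a complex-tangential unit vector $v$ at $p$ whose line has contact order $>m'$, so $\rho(p+\zeta v)\le c_0|\zeta|^{m'+1}$ for $|\zeta|\le\varepsilon_0$. Taking $z_j=p+s_j\nu_p$ with $s_j\downarrow 0$, the Taylor expansion of $\rho$ at $p$ (the $\zeta$-linear term again vanishing, the $s_j$-terms contributing $-s_j+O(s_j|\zeta|)+O(s_j^2)$) shows that $\rho(z_j+\zeta v)<0$ for all $|\zeta|\le\eta\,s_j^{1/(m'+1)}$ once $\eta$ is a small fixed constant and $j$ is large; hence $\delta_{\Omega}(z_j,v)\gtrsim s_j^{1/(m'+1)}$ while $\delta_{\Omega}(z_j)\asymp s_j$, so
\[
\frac{\delta_{\Omega}(z_j,v)}{\delta_{\Omega}(z_j)^{1/m'}}\;\gtrsim\;s_j^{\frac{1}{m'+1}-\frac{1}{m'}}\;\longrightarrow\;\infty\qquad(j\to\infty).
\]
As $m'$ and the implied constant were arbitrary, no choice in the definition can succeed, so $\Omega$ is not $m$-convex.

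The step I expect to be the real obstacle is the uniform convex-geometry estimate in the first implication — passing from ``the line type is finite at every boundary point'' to the uniform bound $\delta_{\Omega}(q+t\nu_q,u)\le C\,t^{1/2m}$. Everything after that (the Taylor bookkeeping at the nearest boundary point, the tangential/transversal split, the contrapositive in the second implication) is routine; but the uniform estimate is not a one-line computation — it needs the semicontinuity of D'Angelo's type and a careful analysis of how a smooth bounded convex domain osculates its complex-tangential lines near the boundary, which is exactly where the smoothness of $\partial\Omega$ cannot be dispensed with.
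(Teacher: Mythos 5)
First, a point of order: the paper does not prove this proposition at all --- it is quoted as a black box from \cite{Zimmer2016Gromov} (Proposition 9.1) --- so there is no in-paper argument to compare yours against. Judged on its own, your backward implication (infinite line type at some $p\in\partial\Omega$ implies failure of $m$-convexity for every $m'$) is correct and essentially complete: for each $m'$ you pick a complex-tangential $v$ with $\rho(p+\zeta v)\le c_0|\zeta|^{m'+1}$, the term linear in $s$ contributes $-s\bigl(1+O(|\zeta|)\bigr)$, so the points $p+s\nu_p+\zeta v$ with $|\zeta|\le\eta\,s^{1/(m'+1)}$ lie in $\Omega$ for small $s$; since $\delta_{\Omega}(p+s\nu_p)\le s$, the ratio $\delta_{\Omega}(z,v)/\delta_{\Omega}(z)^{1/m'}$ blows up. That half is fine.

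The forward implication, however, is not proved: you explicitly ``grant'' the uniform estimate $\delta_{\Omega}(q+t\nu_q,u)\le C\,t^{1/2m}$, and that estimate \emph{is} the entire content of this direction. Upper semicontinuity of the type together with compactness of $\partial\Omega$ gives a uniform bound $2m$ on the order of contact, but it does not give uniform constants: at a fixed $q$ and tangential $u$ of type $2j$ the leading homogeneous part $P_{2j}$ of $\rho(q+\zeta u)$ is nonnegative but may vanish on real rays (e.g.\ $(\mathrm{Re}\,\zeta)^{2j}$), so one must work with $\max_{|\zeta|=r}\rho(q+\zeta u)\ge c(q,u)\,r^{2j}$, and $c(q,u)$ degenerates exactly where the type jumps; controlling that degeneration (the low-degree coefficient tends to $0$ precisely where the higher-degree part must take over) is the McNeal-type analysis you allude to but do not carry out. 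The same missing uniformity reappears in your ``routine interpolation'' for nearly tangential $v$, which needs the tangential estimate to be stable under perturbation of the direction. A route that avoids proving the uniform estimate directly --- and which matches the rescaling technique used elsewhere in this paper (Theorem 3.2, Lemma 3.3) --- is to argue by contradiction: if $\Omega$ is not $m$-convex for any $m$, choose $z_n, v_n$ with $\delta_{\Omega}(z_n,v_n)>n\,\delta_{\Omega}(z_n)^{1/n}$, observe that $\delta_{\Omega}(z_n)\to0$, pass to limits $z_n\to x\in\partial\Omega$, $v_n\to v$, and show that the line $x+\mathbb{C}v$ either bounds an affine disk in $\partial\Omega$ or has order of contact exceeding every $L$; either outcome contradicts finite line type at the single point $x$, where only the pointwise statement is needed. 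As written, your argument is an outline whose central lemma is missing rather than a proof.
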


\bigskip
\subsection{Motivation from Gromov Hyperbolicity}
\begin{defn}
Let $(X, d)$ be a metric space. Given three points $x, y, o \in$ $X,$ the Gromov product is given by
$$(x | y)_{o}=\frac{1}{2}(d(x, o)+d(o, y)-d(x, y)).$$
A proper geodesic metric space $(X, d)$ is Gromov hyperbolic if and
only if there exists $\delta \geq 0$ such that, for all $o, x, y, z
\in X$,
$$(x | y)_{o} \geq \min \left\{(x | z)_{o},(z | y)_{o}\right\}-\delta.$$
\end{defn}
Z~M. Balogh and M.~Bonk \cite{balogh2000gromov} firstly proved those strongly pseudoconvex domains equipped with the Kobayashi metric are Gromov hyperbolic. Later A~M. Zimmer \cite{Zimmer2016Gromov} proved that smooth convex domains with the Kobayashi metrics are Gromov hyperbolic if and only if they are of finite type.

Recently Zimmer proved that locally $m$-convexity is a necessary condition for those convex domains to be Gromov hyperbolicity.
\begin{thm}[\cite{Zimmer2019subellip}, Corollary 7.2]
Suppose that $\Omega$ is a $\mathbb{C}$-proper convex domain and $(\Omega,K_{\Omega})$ is Gromov hyperbolicity. Then $\Omega$ is locally $m$-convex.
\end{thm}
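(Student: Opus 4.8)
The plan is to argue by contraposition, combining an affine rescaling of $\Omega$ near the boundary with the fact that complex-affine maps are Kobayashi isometries. The goal is to show that a failure of local $m$-convexity forces a limiting convex domain that cannot be Gromov hyperbolic, and then to transport this failure back to $\Omega$. So suppose $\Omega$ is $\mathbb{C}$-proper but \emph{not} locally $m$-convex. Then there is a radius $R>0$ for which the defining inequality fails for every choice of constants; taking $C=m=n$ I obtain points $z_n\in B(0,R)\cap\Omega$ and unit vectors $v_n\in\mathbb{C}^d$ with
$$\delta_{\Omega}(z_n,v_n) > n\,\delta_{\Omega}(z_n)^{1/n}.$$
Since on any set staying a fixed distance from $\partial\Omega$ the quantity $\delta_{\Omega}(z,v)$ is controlled, one checks that this forces $\delta_{\Omega}(z_n)\to 0$; after passing to a subsequence I may assume $z_n\to\xi\in\partial\Omega$ and $v_n\to v_\infty$.

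Next I would normalize. Let $A_n$ be the composition of the translation sending $z_n$ to the origin with a diagonal complex-linear dilation whose scale along $v_n$ is $\delta_{\Omega}(z_n,v_n)$ and whose scales in the complementary directions are adapted to the normal opening $\delta_{\Omega}(z_n)$, so that the rescaled domains $A_n(\Omega)$ stay nondegenerate. By convexity and a Blaschke-type selection argument, after a further subsequence $A_n(\Omega)$ converges in the local Hausdorff topology to a convex domain $\Omega_\infty\ni 0$. The displayed inequality says precisely that the opening of $\Omega$ in the $v_n$-direction beats every polynomial rate relative to the normal opening; under the chosen anisotropic scaling this flatness persists in the limit, so $\Omega_\infty$ contains a nontrivial complex-affine line (equivalently, $\Omega_\infty$ fails to be $\mathbb{C}$-proper, or it splits off a flat factor). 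In either description $(\Omega_\infty,K_{\Omega_\infty})$ cannot be Gromov hyperbolic: an entire affine line lies at Kobayashi distance zero, so the Kobayashi distance degenerates, while a bi-infinite flat contradicts the thin-triangle (no flat plane) property characteristic of Gromov hyperbolic spaces.

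Finally I would transport hyperbolicity to the limit and reach the contradiction. Each $A_n$ is a biholomorphism, hence a Kobayashi isometry, so $(A_n\Omega,K_{A_n\Omega})$ is $\delta$-hyperbolic with the \emph{same} constant $\delta$ as $(\Omega,K_\Omega)$. For convex domains converging in the local Hausdorff topology the Kobayashi distances converge locally uniformly, using the comparison $K_\Omega(z;v)\asymp |v|/\delta_{\Omega}(z,v)$ and its stability under Hausdorff limits. Because the Gromov four-point inequality is a closed condition under local-uniform convergence of the distance, it passes to the limit, so $(\Omega_\infty,K_{\Omega_\infty})$ would also be $\delta$-hyperbolic, contradicting the previous paragraph. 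Hence $\Omega$ must be locally $m$-convex.

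The hard part is the middle step: choosing the anisotropic normalization so that the limit is \emph{simultaneously} a genuine (nondegenerate) convex domain and genuinely flat, that is, quantitatively converting the estimate that $\delta_{\Omega}(z_n,v_n)$ beats every $\delta_{\Omega}(z_n)^{1/n}$ into an honest complex-affine line inside $\Omega_\infty$. This will require a careful, possibly multi-scale, dilation together with Blaschke compactness, and the verification that the Kobayashi distances really do converge locally uniformly under this rescaling; the possibly unbounded geometry of a general $\mathbb{C}$-proper domain adds further bookkeeping.
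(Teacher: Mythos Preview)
First, a framing remark: the paper does \emph{not} prove this statement; it is quoted from \cite{Zimmer2019subellip} as background motivation. The paper does, however, restate the key scaling input from the same source as Theorem~\ref{scaling} (and its consequence Lemma~\ref{scaling2}), and those results make clear what the correct limiting picture is---and why your middle step goes wrong.

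Your overall architecture (contrapositive, affine rescaling, compactness, then pass Gromov hyperbolicity to a local Hausdorff limit via Theorem~\ref{hausdorff}) is exactly the right shape. The gap is in the description of the limit $\Omega_\infty$. You assert that the inequality $\delta_\Omega(z_n,v_n)>n\,\delta_\Omega(z_n)^{1/n}$ forces the rescaled limit to \emph{contain} a complex affine line, i.e.\ to fail to be $\mathbb{C}$-proper. But that conclusion is both stronger than what one can actually extract from the scaling and, worse, self-defeating: Theorem~\ref{hausdorff} (the local uniform convergence of Kobayashi distances) requires the limit to be $\mathbb{C}$-proper. If $\Omega_\infty$ contains an entire affine line then $K_{\Omega_\infty}$ is not even a distance, and there is no Gromov four-point inequality to violate in the limit; your transport step then collapses.

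What the scaling actually produces---this is precisely the content of Theorem~\ref{scaling}---is a $\mathbb{C}$-proper limit whose \emph{boundary} contains a nontrivial complex affine disk (compare Lemma~\ref{scaling2}, where the limit contains $C(\alpha,\beta)\times\Delta\times\{\vec 0\}$ with the disk $\{0\}\times\Delta\times\{\vec 0\}$ sitting in $\partial\widehat\Omega$). The remaining, genuinely nontrivial, step is then to show that a $\mathbb{C}$-proper convex domain with an affine disk in its boundary is not Gromov hyperbolic (one exhibits a quasi-isometrically embedded flat half-plane, not a degenerate metric). Your proposal skips this and instead reaches a non-$\mathbb{C}$-proper limit, which short-circuits the argument. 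So the ``hard part'' you flag is indeed hard, but the difficulty is different from the one you identify: it is not about making the limit ``genuinely flat'' in the sense of containing a line, but about keeping it $\mathbb{C}$-proper while putting the flatness on $\partial\Omega_\infty$, and then separately showing that boundary flatness obstructs Gromov hyperbolicity.
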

This paper is motivated by the above Zimmer's work, and Theorem $\ref{main}$ can be seen as an analogue of the above Theorem 1.5.
\bigskip
\section{{\bf Preliminaries}}
\subsection{Notations}\

(1) \:For $z\in\mathbb{C}^d$, let $|z|$ be the standard Euclidean
norm and let $d_{euc}(z_1,z_2)= |z_1-z_2 |$ be the standard Euclidean
distance.

(2) \:Given an open set $\Omega\subset\mathbb{C}^n,\:p\in\Omega$ and
$v\in\mathbb{C}^n\backslash\{0\}$, let
$$\delta_{\Omega}(p)=\inf\{d_{euc}(p,x):x\in\partial \Omega\}$$
as before, and let
$$\delta_{\Omega}(p,v)=\inf\{d_{euc}(p,x):x\in\partial \Omega\cap(p+\mathbb{C}v)\}.$$

(3) \:For any curve $\sigma$, we denote by $L(\sigma)$ the
length of $\sigma$.

(4) \:For any $z_0 \in \mathbb C^n$ and $\delta >0$, let $B_{euc}(z_0, \delta)$ denote the open ball
$B_{euc}(z_0, \delta)=\{z\in \mathbb C^n| \:|z-z_0|<\delta\}$.

(5) \:Write $Aff(\mathbb{C}^d)$ the group of complex affine automorphisms of $\mathbb{C}^d$.

(6) \:Let $\mathbb{X}_d$ denote the set of all $\mathbb{C}$-proper convex domains in $\mathbb{C}^d$ endowed with the local Hausdorff topology.
\subsection{The Kobayashi metric}\
Given a domain $\Omega \subset \mathbb{C}^{d}$, the (infinitesimal)
Kobayashi metric is the pseudo-Finsler metric defined by
$$k_{\Omega}(x ; v)=\inf \left\{|\xi|: f \in \operatorname{Hol}(\mathbb{D}, \Omega), \: f(0)=x, \:
d(f)_{*,0}(\xi)=v\right\}.$$ Define the length of any curve $\sigma$
to be
$$L(\sigma)=\int_{a}^{b} k_{\Omega}\left(\sigma(t) ; \sigma^{\prime}(t)\right) d
t.$$ Then we can define the Kobayashi pseudo-distance to be
\begin{align*}
K_{\Omega}(x, y)&=\inf \left\{L(\sigma)| \:\sigma :[a, b]
\rightarrow \Omega \text { is any absolutely continuous curve }\right.\\
& \text { with } \sigma(a)=x \text { and } \sigma(b)=y \}.
\end{align*}

The following is a well known property on the Kobayashi metric.
\begin{proposition}
If $f: \Omega_{1} \rightarrow \Omega_{2}$ is holomorphic, then, for all $z \in \Omega_{1}$ and $v \in \mathbb{C}^{d}$,
$$
k_{\Omega_{2}}\left(f(z); \: d f_{z}(v)\right) \leq k_{\Omega_{1}}(z ; \: v).
$$
Moreover,
$$
K_{\Omega_{2}}\left(f\left(z_{1}\right), f\left(z_{2}\right)\right) \leq K_{\Omega_{1}}\left(z_{1}, z_{2}\right),
$$
for all $z_{1}, z_{2} \in \Omega_{1}$.
\end{proposition}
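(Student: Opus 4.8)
The plan is to reduce everything to the defining property of the Kobayashi metric, namely that it is an infimum over analytic discs, so that the statement becomes the trivial observation that post-composition with $f$ carries analytic discs in $\Omega_1$ to analytic discs in $\Omega_2$.

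First I would prove the infinitesimal inequality. Fix $z\in\Omega_1$ and $v\in\mathbb C^d$, and let $g\in\operatorname{Hol}(\mathbb D,\Omega_1)$ with $g(0)=z$ and $d(g)_{*,0}(\xi)=v$ be an arbitrary competitor in the definition of $k_{\Omega_1}(z;v)$. Then $h:=f\circ g\in\operatorname{Hol}(\mathbb D,\Omega_2)$, and by the chain rule $h(0)=f(z)$ and $d(h)_{*,0}(\xi)=df_z\big(d(g)_{*,0}(\xi)\big)=df_z(v)$. Hence $h$ is an admissible competitor for $k_{\Omega_2}\big(f(z);df_z(v)\big)$ with the same $\xi$, so $k_{\Omega_2}\big(f(z);df_z(v)\big)\le|\xi|$. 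Taking the infimum over all such $g$ (equivalently over all admissible $\xi$) gives the first claimed inequality.

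Next I would integrate this pointwise estimate along curves. Let $\sigma:[a,b]\to\Omega_1$ be any absolutely continuous curve with $\sigma(a)=z_1$ and $\sigma(b)=z_2$. Since $f$ is holomorphic, hence smooth, the composition $f\circ\sigma$ is again absolutely continuous, joins $f(z_1)$ to $f(z_2)$ inside $\Omega_2$, and satisfies $(f\circ\sigma)'(t)=df_{\sigma(t)}\big(\sigma'(t)\big)$ for almost every $t$. Applying the infinitesimal inequality at a.e.\ $t$ and integrating,
\[
L(f\circ\sigma)=\int_a^b k_{\Omega_2}\!\big((f\circ\sigma)(t);(f\circ\sigma)'(t)\big)\,dt\le\int_a^b k_{\Omega_1}\!\big(\sigma(t);\sigma'(t)\big)\,dt=L(\sigma).
\]
Taking the infimum over all such curves $\sigma$ then yields $K_{\Omega_2}\big(f(z_1),f(z_2)\big)\le K_{\Omega_1}(z_1,z_2)$.

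This argument has no serious obstacle; the statement is essentially the functoriality built into the definition. The only points that deserve a word of care are the measurability of $t\mapsto k_\Omega(\sigma(t);\sigma'(t))$, so that the length integral is well defined — this follows from the upper semicontinuity of the Kobayashi metric — and the validity of the chain rule for the composition of the a.e.-differentiable curve $\sigma$ with the smooth map $f$; both are standard. I would therefore expect the write-up to be short, with the bulk of the (modest) effort going into stating the chain rule and the measurability remark precisely.
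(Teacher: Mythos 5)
Your proof is correct and is the standard functoriality argument: post-composition with $f$ sends competitors for $k_{\Omega_1}(z;v)$ to competitors for $k_{\Omega_2}(f(z);df_z(v))$, and the distance inequality follows by integrating along curves. The paper states this proposition as a well-known property without proof, so there is nothing to compare against; your write-up (including the remarks on measurability and the chain rule) supplies exactly the omitted standard argument.
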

For any product domain, the Kobayashi metric has the following product property (cf. \cite{Jarnicki1993Invariant}, p.107),
$$K_{\Omega_1\times\Omega_2}((x_1,y_1),(x_2,y_2))=\max\{K_{\Omega_1}(x_1,x_2),K_{\Omega_2}(y_1,y_2)\},$$
which makes a product domain behave like a positively curved space.

\subsection{CAT(0) space}
\begin{defn}
Let $I\subset\mathbb{R}$ be an
interval. A map $\sigma: I\rightarrow \Omega$ is called a geodesic segment
if, for all $s,t\in I$,
$$K_{\Omega}(\sigma(s),\sigma(t))=|t-s|.$$
And $(X,d)$ is called a geodesic metric space if any two points in $X$ are joined by a geodesic segment.
\end{defn}
\begin{rmk}
Note that the paths which are commonly called 'geodesics' in differential
geometry need not be geodesics in the above sense. In general they will only
be local geodesics.
\end{rmk}

Let $(X,d)$ be a geodesic metric space. For any three points $a,b,c\in X$,
suppose that $[a, b],[b, c],[c, a]$ form a geodesic triangle $\Delta$. Let $\bar{\Delta}(\bar{a}, \bar{b}, \bar{c}) \subset \mathbb{R}^{2}$ be a triangle in the Euclidean plane with the same edge lengths as $\Delta$. Let $p, q$ be any points on $[a, b]$ and
$[a, c],$ and let $\bar{p}, \bar{q}$ be the corresponding points on $[\bar{a}, \bar{b}]$ and $[\bar{a}, \bar{c}],$ respectively, such that
$$
\operatorname{dist}_{X}(a, p)=
\operatorname{dist}_{\mathbb{R}^{2}}(\bar{a}, \bar{p}), \quad \operatorname{dist}_{X}(a, q)=\operatorname{dist}_{\mathbb{R}^{2}}(\bar{a}, \bar{q}) .
$$
\begin{defn}
We call $(X, d)$ a $\mathbf{C A T}(\mathbf{0})$ space, if for any geodesic triangle $\Delta\subset X$ the
inequality $\operatorname{dist}_{X}(p, q) \leq \operatorname{dist}_{\mathbb{R}^{2}}(\bar{p}, \bar{q})$ holds.
\end{defn}
Typical examples are trees and complete
simply connected manifolds with non-positive sectional curvature.
Note that there is an equivalent definition about $CAT(0)$ spaces.
\begin{thm}[\cite{bruhat1972groupes}]\label{mid}
Let $(X,d)$ be a geodesic metric space.
Then $(X,d)$ is $CAT(0)$ if and only if for any three points $x,y,z\in X$,
$$d^2(z,m)\leq\frac{1}{2}(d^2(z,x)+d^2(z,y))-\frac{1}{4}d^2(x,y),$$
where $m$ is the midpoint of the geodesic segment from $x$ to $y$.
\end{thm}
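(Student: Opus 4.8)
The strategy is to pit the midpoint inequality against the exact \emph{median identity} of Euclidean geometry. Recall that for any triangle $\bar\Delta(\bar x,\bar y,\bar z)$ in $\mathbb{R}^2$, if $\bar m$ denotes the midpoint of $[\bar x,\bar y]$, then Apollonius' theorem gives
$$d^2(\bar z,\bar m)=\tfrac12\big(d^2(\bar z,\bar x)+d^2(\bar z,\bar y)\big)-\tfrac14\,d^2(\bar x,\bar y).$$
Thus the displayed inequality is precisely the statement that, \emph{at midpoints}, distances in $X$ never exceed their Euclidean comparison values. The forward direction is then immediate: assuming $(X,d)$ is CAT(0), fix $x,y,z$, let $m$ be the midpoint of $[x,y]$, and build a comparison triangle $\bar\Delta(\bar x,\bar y,\bar z)$. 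The point $\bar m$ corresponding to $m$ satisfies $d(\bar x,\bar m)=d(x,m)=\tfrac12 d(x,y)$, so it is the Euclidean midpoint; the CAT(0) inequality $d(z,m)\le d(\bar z,\bar m)$ together with the median identity and the equality of corresponding side lengths yields the claimed inequality.

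For the converse I would assume the midpoint inequality and recover the full comparison property in two steps. The first step is \emph{vertex comparison}: for every geodesic triangle $\Delta(x,y,z)$ and every $p\in[x,y]$ one has $d(z,p)\le d(\bar z,\bar p)$. This holds at the midpoint by hypothesis and the median identity, and I would propagate it to all dyadic points of $[x,y]$ by induction. Concretely, applying the midpoint inequality to the sub-triangle $\Delta(x,m,z)$ controls $d(z,p)$ for the quarter point $p$ in terms of $d(z,x)$ and the already-bounded $d(z,m)\le d(\bar z,\bar m)$; matching this against the median identity on the Euclidean sub-triangle $\bar\Delta(\bar x,\bar m,\bar z)$---which is a genuine sub-triangle of $\bar\Delta(\bar x,\bar y,\bar z)$, so that its relevant midpoint is exactly the global comparison point $\bar p$---gives $d(z,p)\le d(\bar z,\bar p)$. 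Continuity of the geodesic and of $d$ then upgrades the dyadic estimate to every $p\in[x,y]$.

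The second step passes from vertex comparison to the \emph{two-point comparison}: for $p\in[x,z]$ and $q\in[x,y]$, show $d(p,q)\le d(\bar p,\bar q)$. First, vertex comparison applied to $\Delta(x,y,z)$ from the vertex $z$ gives $d(z,q)\le d(\bar z,\bar q)$. Next I form the auxiliary geodesic triangle $\Delta(x,z,q)$ and apply vertex comparison again, now from the vertex $q$, obtaining $d(p,q)\le d(\tilde p,\tilde q)$ inside its comparison triangle $\tilde\Delta(\tilde x,\tilde z,\tilde q)$. This triangle shares the two sides $d(x,z)$ and $d(x,q)$ meeting at $x$ with the Euclidean sub-configuration $\bar\Delta(\bar x,\bar z,\bar q)$, but has shorter opposite side $d(z,q)\le d(\bar z,\bar q)$; by the monotonicity of the law of cosines the angle at $\tilde x$ is no larger than that at $\bar x$. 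Since $\tilde p,\tilde q$ and $\bar p,\bar q$ lie at equal distances along the corresponding sides, one final application of the law of cosines gives $d(\tilde p,\tilde q)\le d(\bar p,\bar q)$, closing the chain $d(p,q)\le d(\tilde p,\tilde q)\le d(\bar p,\bar q)$.

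The main obstacle is this passage from midpoint control to full two-point control. The dyadic induction of the first step is delicate because at each level one must certify that the midpoint produced inside a sub-triangle really coincides with the intended global comparison point, so that the Euclidean median identities telescope correctly. The second step rests on the angle-monotonicity of the law of cosines together with careful bookkeeping that keeps three distinct comparison triangles---for $\Delta(x,y,z)$, for the auxiliary $\Delta(x,z,q)$, and for the sub-configuration at the vertex $x$---from being conflated. Once these alignments are pinned down, the surviving inequalities are routine planar trigonometry.
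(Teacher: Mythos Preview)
The paper does not prove this theorem at all: it is quoted as a known characterization due to Bruhat and Tits, with only a citation and no argument. So there is nothing in the paper to compare your proof against.

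That said, your outline is the standard proof and is essentially correct. The forward direction via Apollonius' median identity is immediate. For the converse, your dyadic propagation in Step~1 is the right mechanism, and you have correctly identified the one subtle point: at each stage you do \emph{not} pass to the comparison triangle of the sub-triangle $\Delta(x,m,z)$ (whose side $d(z,m)$ may already be strictly shorter than $d(\bar z,\bar m)$), but instead apply the median identity inside the genuine planar sub-triangle $\bar\Delta(\bar x,\bar m,\bar z)$ of the original comparison triangle; the inequality $d(z,m)\le d(\bar z,\bar m)$ then feeds forward. Step~2 is precisely Alexandrov's lemma, and your law-of-cosines monotonicity argument is the usual way to phrase it. One small caveat worth making explicit in a full write-up: since geodesics in a general geodesic space need not be unique, the hypothesis should be read as holding for \emph{every} midpoint of \emph{every} geodesic segment, so that the dyadic midpoints along your chosen side $[x,y]$ are covered.
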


\subsection{Finite type}
For any function $f: \mathbb{C} \rightarrow \mathbb{R}$ with $f(0)=0$, we will denote by $\nu(f)$ the
order of vanishing of $f$ at $0$.

Let $\Omega=\left\{z \in \mathbb{C}^{d}: r(z)<0\right\}$ where $r$ is a
$C^{\infty}$ defining function with $\nabla r \neq 0$ near $\partial \Omega$. A point $x \in \partial \Omega$ is said to have finite line type
$L$ if
$$
\sup \left\{\nu(r \circ \ell) | \ell: \mathbb{C} \rightarrow \mathbb{C}^{d} \text { is a non-trivial affine map and } \ell(0)=x\right\}=L
$$
Note that $\nu(r \circ \ell) \geq 2$ if and only if $\ell(\mathbb{C})$ is tangent to $\Omega .$
\subsection{Local Hausdorff topology}

Given a set $A\subset \mathbb{C}^d$,
let $\mathcal{N}_{\epsilon}$ denote the $\epsilon$-neighborhood of $A$ with respect to
the Euclidean distance. The Hausdorff distance between any two compact sets $A$,$B$ is
given by
$$
d_{H}(A,B)=\inf\{\epsilon>0:A\subset\mathcal{N}_{\epsilon}(B) \quad \text{and} \quad B\subset\mathcal{N}_{\epsilon}(A)\}.
$$
The Hausdorff distance is a complete metric on the space of compact sets in $\mathbb{C}^d$.
The space of all closed convex sets in $\mathbb{C}^d$ can be given a topology from the local
Hausdorff semi-norms.

For $R>0$ and a set $A\subset\mathbb{C}^d$, let
$A^{(R)}:=A\cap B_{R}(0)$. Then we can define the local Hausdorff semi-norms by
$$
d_{H}^{(R)}(A, \:B):=d_{H}(A^{(R)}, \: B^{(R)}).
$$
Since an open convex set is completely determined by its closure, we say a sequence of open convex sets $\{A_n\}$ converges in the local Hausdorff topology to an open convex set $B$ if $d_{H}^{(R)}(\overline{A}, \: \overline{B})\rightarrow 0$ for all $R>0$.

\bigskip
Recently A M. Zimmer proved the following result.
\begin{thm}[\cite{Zimmer2016Gromov},Theorem 4.1]\label{hausdorff}
Suppose that $\{\Omega_n\}$ is a sequence of $\mathbb{C}$-proper convex domains converging to a $\mathbb{C}$-proper convex domain $\Omega$ in the local Hausdorff topology. Then, for all $x, y\in\Omega$,
$$
K_{\Omega}(x,y)=\lim\limits_{n\rightarrow\infty} K_{\Omega_n}(x,y),
$$
uniformly on compact sets of $\Omega\times\Omega$.
\end{thm}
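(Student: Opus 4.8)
The plan is to deduce the statement from Lempert's theorem, which lets us work with a single analytic disc. Since $\Omega$ and every $\Omega_n$ is convex, for any convex domain $D$ one has the Lempert formula
$$K_D(x,y)=\inf\bigl\{\, d_{\mathbb D}(a,b):\ f\in\operatorname{Hol}(\mathbb D,D),\ f(a)=x,\ f(b)=y\,\bigr\},$$
so the Kobayashi distance is computed by one disc. With $x,y\in\Omega$ fixed it then suffices to prove the two one‑sided bounds
$$\limsup_{n\to\infty}K_{\Omega_n}(x,y)\le K_\Omega(x,y)\qquad\text{and}\qquad \liminf_{n\to\infty}K_{\Omega_n}(x,y)\ge K_\Omega(x,y),$$
and afterwards to promote the resulting pointwise convergence to local uniform convergence by an equicontinuity argument.

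For the upper bound, given $\varepsilon>0$ pick $f\in\operatorname{Hol}(\mathbb D,\Omega)$ with $f(a)=x$, $f(b)=y$ and $d_{\mathbb D}(a,b)<K_\Omega(x,y)+\varepsilon$. For $r<1$ close to $1$ (so that $|a|,|b|<r$) the rescaled disc $f_r(\zeta):=f(r\zeta)$ satisfies $f_r(a/r)=x$, $f_r(b/r)=y$, and its image $f_r(\mathbb D)=f(\{|\zeta|<r\})$ has compact closure in $\Omega$. It is a standard consequence of local Hausdorff convergence of convex domains that a compact subset of $\Omega$ is contained in $\Omega_n$ for all large $n$; hence $f_r\in\operatorname{Hol}(\mathbb D,\Omega_n)$ eventually, so $K_{\Omega_n}(x,y)\le d_{\mathbb D}(a/r,b/r)$. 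Letting $n\to\infty$, then $r\to1$, then $\varepsilon\to0$ gives the upper bound.

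The lower bound is the heart of the matter. Suppose $K_{\Omega_n}(x,y)\to L$ along a subsequence; we must show $L\ge K_\Omega(x,y)$. By Lempert's formula and an automorphism of $\mathbb D$, choose $f_n\in\operatorname{Hol}(\mathbb D,\Omega_n)$ with $f_n(0)=x$, $f_n(t_n)=y$ for some $t_n\in[0,1)$ and $d_{\mathbb D}(0,t_n)\le K_{\Omega_n}(x,y)+1/n$; then $t_n$ stays in a compact subinterval of $[0,1)$, so after a further subsequence $t_n\to t_\infty$ with $d_{\mathbb D}(0,t_\infty)=L$. The crucial — and, I expect, the most delicate — step is to show that $\{f_n\}$ is a normal family, i.e.\ that $f_n(r\mathbb D)$ lies in a fixed compact subset of $\mathbb C^d$ for each $r<1$, uniformly in $n$. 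Since $K_{\Omega_n}(x,f_n(\zeta))\le d_{\mathbb D}(0,r)$ for $|\zeta|\le r$, it is enough that the Kobayashi balls $\{w\in\Omega_n:K_{\Omega_n}(x,w)\le d_{\mathbb D}(0,r)\}$ lie in a fixed Euclidean ball, uniformly in large $n$. This \emph{uniform properness} is exactly where the hypothesis that the limit $\Omega$ is $\mathbb C$-proper is used: via supporting real hyperplanes of $\Omega$ (equivalently, the fact that $\overline\Omega$ contains no complex affine line) one gets a lower estimate of the shape $K_\Omega(x,w)\ge c\log\bigl(1+c\,d_{euc}(x,w)\bigr)$ near $x$, and because $\partial\Omega_n$ is Hausdorff‑close to $\partial\Omega$ on large balls this bound persists for the $\Omega_n$ with a constant independent of $n$. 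Granting the normal family property, Montel's theorem furnishes a subsequence $f_n\to f\in\operatorname{Hol}(\mathbb D,\mathbb C^d)$ locally uniformly, with $f(\mathbb D)\subset\overline\Omega$, $f(0)=x\in\Omega$ and $f(t_\infty)=y\ne x$. If $f(\zeta_0)\in\partial\Omega$, then applying the maximum principle to $\operatorname{Re}(L\circ f)$, where $L$ is a complex affine function whose real part defines a supporting hyperplane of $\Omega$ at $f(\zeta_0)$, would force $f$ to be constant — impossible. Hence $f\in\operatorname{Hol}(\mathbb D,\Omega)$, and Lempert's formula gives $K_\Omega(x,y)\le d_{\mathbb D}(0,t_\infty)=L$, the desired inequality.

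Finally, for the uniformity statement: given a compact $K\subset\Omega$, fix $\rho>0$ with $\overline{B_{euc}(z,\rho)}\subset\Omega$ for all $z\in K$; then $B_{euc}(z,\rho)\subset\Omega_n$ for large $n$, whence $K_{\Omega_n}(z,z')\le K_{B_{euc}(z,\rho)}(z,z')\le C_\rho\,d_{euc}(z,z')$ for $z'$ near $z$, so $\{K_{\Omega_n}\}$ is uniformly equicontinuous on compact subsets of $\Omega\times\Omega$. Together with the pointwise convergence $K_{\Omega_n}\to K_\Omega$ proved above, the Arzel\`a--Ascoli theorem upgrades this to uniform convergence on compact subsets of $\Omega\times\Omega$. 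The one genuinely hard ingredient in this plan is the uniform properness of $\{K_{\Omega_n}\}$ entering the lower bound; everything else is soft.
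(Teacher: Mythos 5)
The paper offers no proof of this statement---it is quoted directly as Theorem 4.1 of \cite{Zimmer2016Gromov}---so your proposal can only be judged on its own terms. Its architecture (Lempert discs for the two one-sided bounds, a normal-families argument for the lower bound, equicontinuity plus Arzel\`a--Ascoli for uniformity on compacta) is sound and close in spirit to Zimmer's actual argument; the upper bound, the maximum-principle step showing the limit disc stays inside $\Omega$, and the final equicontinuity step are all correct.

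The genuine gap sits exactly at the step you yourself flag as the hard one, and the justification you offer for it fails. You assert that the uniform lower bound $K_{\Omega_n}(x,w)\ge c\log\bigl(1+c\,d_{euc}(x,w)\bigr)$, with $c$ independent of $n$, ``persists'' from $\Omega$ to $\Omega_n$ ``because $\partial\Omega_n$ is Hausdorff-close to $\partial\Omega$ on large balls.'' But local Hausdorff convergence constrains $\overline{\Omega_n}$ only inside each fixed Euclidean ball and says nothing about the domains outside it; in particular, a complex hyperplane disjoint from $\Omega$---the object producing the half-plane projections behind such lower bounds---need not be disjoint from any $\Omega_n$. Concretely, let $\Omega=\mathbb{H}\times\Delta\subset\mathbb{C}^2$ and let $\Omega_n$ be the interior of the convex hull of $\overline{\Omega}\cup\{(in^2,2)\}$: these are $\mathbb{C}$-proper convex domains converging to $\Omega$ in the local Hausdorff topology, yet the hyperplane $\{z_2=3/2\}$, which is disjoint from $\Omega$, meets every $\Omega_n$. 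Moreover the estimate is needed for all $w$ in a fixed Kobayashi ball of $\Omega_n$, which may a priori reach far from $x$, so ``near $x$'' is not the right formulation for a properness bound. The uniform boundedness of the balls $\{w\in\Omega_n: K_{\Omega_n}(x,w)\le R\}$ is true, but it requires its own argument (one must produce, for each $R$, $d$ linearly independent complex hyperplanes at controlled distance from $x$ avoiding all $\Omega_n$ with $n$ large, or argue directly from convexity); without it the normality of $\{f_n\}$, and hence the entire lower bound, is unproven. A minor additional point: Lempert's formula for possibly unbounded $\mathbb{C}$-proper convex domains should be justified by exhaustion from the bounded case, though since you only use near-optimal discs this is cosmetic.
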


\bigskip
\section{{\bf Proof of Theorem \ref{main} }}
Our proof is based on the following simple observation.
\begin{obser}
If $\Omega=\Omega_1\times\Omega_2$ is a Kobayashi hyperbolic domain, then $(\Omega,K_{\Omega})$ is not a $CAT(0)$ space.
\end{obser}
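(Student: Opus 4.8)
The plan is to exploit the product formula for the Kobayashi metric together with the midpoint characterization of CAT(0) spaces from Theorem \ref{mid}. Since $\Omega = \Omega_1 \times \Omega_2$ is Kobayashi hyperbolic, both factors $\Omega_1, \Omega_2$ are Kobayashi hyperbolic, so we may pick nontrivial geodesic-type configurations inside each factor. The key point is that the product property $K_{\Omega_1\times\Omega_2}((x_1,y_1),(x_2,y_2)) = \max\{K_{\Omega_1}(x_1,x_2), K_{\Omega_2}(y_1,y_2)\}$ makes the $\ell^\infty$-type behavior incompatible with the $\ell^2$-type midpoint inequality that CAT(0) demands.

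**First I would** choose points $a_1, b_1 \in \Omega_1$ with $K_{\Omega_1}(a_1,b_1) = 2r > 0$ and a point $c_1$ that is a midpoint: $K_{\Omega_1}(a_1,c_1) = K_{\Omega_1}(c_1,b_1) = r$ (such a midpoint exists since the Kobayashi metric on a hyperbolic domain is a length metric, hence midpoints exist; alternatively one can work with points on a fixed complex geodesic disc). Symmetrically, choose $a_2, b_2, c_2 \in \Omega_2$ with $K_{\Omega_2}(a_2,b_2) = 2r$ and $c_2$ its midpoint. Now set $x = (a_1, b_2)$, $y = (b_1, a_2)$ in $\Omega$. Then by the product formula $K_\Omega(x,y) = \max\{2r, 2r\} = 2r$. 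Consider the candidate midpoint $m = (c_1, c_2)$: we have $K_\Omega(x, m) = \max\{K_{\Omega_1}(a_1,c_1), K_{\Omega_2}(b_2,c_2)\} = \max\{r, r\} = r$ and similarly $K_\Omega(m, y) = r$, so $m$ genuinely lies at the midpoint distance. Finally pick the test point $z = (a_1, a_2)$; then $K_\Omega(z, x) = \max\{0, K_{\Omega_2}(a_2,b_2)\} = 2r$, $K_\Omega(z,y) = \max\{K_{\Omega_1}(a_1,b_1), 0\} = 2r$, while $K_\Omega(z, m) = \max\{r, r\} = r$.

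**Then I would** simply substitute into the inequality of Theorem \ref{mid}: CAT(0) would force
$$
r^2 = K_\Omega(z,m)^2 \leq \tfrac12\big(K_\Omega(z,x)^2 + K_\Omega(z,y)^2\big) - \tfrac14 K_\Omega(x,y)^2 = \tfrac12(4r^2 + 4r^2) - \tfrac14(4r^2) = 3r^2,
$$
which is actually satisfied — so this particular choice is too weak. The fix is to instead take $z = (a_1, c_2)$ or, better, to use the midpoint $m$ as the test point for the triangle with vertices $x, y$, and a fourth point. The cleaner route: apply Theorem \ref{mid} to $x = (a_1,b_2)$, $y = (b_1,a_2)$ and test point $z = (b_1, b_2)$. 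Then $K_\Omega(z,x) = \max\{K_{\Omega_1}(b_1,a_1), 0\} = 2r$ and $K_\Omega(z,y) = \max\{0, K_{\Omega_2}(b_2,a_2)\} = 2r$, while the geodesic midpoint $m$ of $x,y$ must satisfy $K_\Omega(x,m)=K_\Omega(m,y)=r$; but any such $m=(m_1,m_2)$ has $K_{\Omega_1}(a_1,m_1)\le r$, $K_{\Omega_1}(m_1,b_1)\le r$ forcing $K_{\Omega_1}(a_1,m_1)=K_{\Omega_1}(m_1,b_1)=r$, and likewise in the second factor, hence $K_\Omega(z,m) = \max\{r,r\} = r$. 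The CAT(0) inequality again gives $r^2 \le 3r^2$, still not a contradiction.

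**The real obstacle, and its resolution:** the single midpoint inequality is not enough; one must iterate or use a genuinely $2$-dimensional obstruction. The right approach is to note that in a CAT(0) space geodesics are unique and vary continuously, whereas in $\Omega_1\times\Omega_2$ with the sup metric the geodesic between $(a_1,b_2)$ and $(b_1,a_2)$ is highly non-unique — one can route through $(c_1, b_2)$ then $(c_1, a_2)$, or through $(a_1, c_2)$ then $(b_1, c_2)$, giving two distinct midpoints $(c_1, \cdot)$-type and $(\cdot, c_2)$-type both at distance $r$ from each endpoint when the factor-distances are balanced. Concretely, take $a_1=b_1$ collapsed is degenerate; instead keep them distinct and observe: with $x=(a_1,a_2)$, $y=(b_1,b_2)$ where $K_{\Omega_1}(a_1,b_1)=K_{\Omega_2}(a_2,b_2)=2r$, the points $p=(b_1,a_2)$ and $q=(a_1,b_2)$ both satisfy $K_\Omega(x,p)=K_\Omega(p,y)=2r$ and $K_\Omega(x,q)=K_\Omega(q,y)=2r$ — so they are \emph{not} midpoints. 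Rather, take $p'=(c_1,a_2)$: then $K_\Omega(x,p')=r$ but $K_\Omega(p',y)=\max\{r,2r\}=2r$. The honest midpoint is $m=(c_1,c_2)$ with $K_\Omega(x,m)=K_\Omega(m,y)=r$, and it is unique among points realizing both half-distances. Now apply Theorem \ref{mid} with test point $z=(b_1,a_2)$: $K_\Omega(z,x)=\max\{2r,0\}=2r$, $K_\Omega(z,y)=\max\{0,2r\}=2r$, $K_\Omega(z,m)=\max\{r,r\}=r$, and $K_\Omega(x,y)=2r$, yielding $r^2\le \tfrac12(8r^2)-r^2=3r^2$. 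The persistent factor-of-3 slack signals that we must instead \textbf{rescale}: replace $r$ by choosing $a_1,b_1$ far apart and $a_2,b_2$ close, i.e. $K_{\Omega_1}(a_1,b_1)=2s$, $K_{\Omega_2}(a_2,b_2)=2t$ with $s\ne t$. Then with $x=(a_1,a_2),y=(b_1,b_2)$ we get $K_\Omega(x,y)=2\max(s,t)$; the midpoint $m$ of this geodesic in a would-be CAT(0) structure is forced, but any point with $K_\Omega(x,m)=K_\Omega(m,y)=\max(s,t)$ need only satisfy $K_{\Omega_2}(a_2,m_2)\le\max(s,t)$ which, if $s>t$, does \emph{not} pin down $m_2$ to the $\Omega_2$-midpoint $c_2$. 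Choosing $z=(b_1,c_2)$ one computes $K_\Omega(z,x)=\max(2s,t)=2s$, $K_\Omega(z,y)=\max(0 \text{ or } \le 2s, t)$... the bookkeeping here is exactly the step I expect to be delicate, and it is where the argument must be pushed through carefully: \textbf{the main obstacle is selecting the test point $z$ and the asymmetry $s$ vs.\ $t$ so that the $\ell^\infty$-flat square $[0,s]\times[0,t]$ embedded in $\Omega$ violates the $\ell^2$ midpoint inequality}, using the fact (from the explicit $\ell^\infty$ geometry of a rectangle) that a Euclidean rectangle with the sup metric is never CAT(0) unless one side degenerates, combined with Theorem \ref{hausdorff} to pass from the abstract product to concrete sub-configurations if needed. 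Once the rectangle obstruction is isolated, the contradiction with Theorem \ref{mid} is immediate and the observation follows.
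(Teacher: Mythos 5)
Your overall strategy --- combining the product formula $K_{\Omega_1\times\Omega_2}=\max\{K_{\Omega_1},K_{\Omega_2}\}$ with the midpoint inequality of Theorem \ref{mid} --- is exactly the paper's strategy, but the proposal never actually produces a configuration that violates the inequality. Every explicit triple you test yields $r^2\le 3r^2$, which is consistent with CAT(0), and you acknowledge this each time; the final paragraph, which appeals to ``a rectangle with the sup metric is never CAT(0)'' and promises that the contradiction is then immediate, is precisely where the proof has to happen and it is left blank. A side error: your claim that $(c_1,c_2)$ is the \emph{unique} point realizing both half-distances from $(a_1,a_2)$ and $(b_1,b_2)$ is false, since any $(c_1,c_2')$ with $\max\{K_{\Omega_2}(a_2,c_2'),K_{\Omega_2}(c_2',b_2)\}\le r$ also realizes them --- this non-uniqueness is a consequence of the very $\max$ structure you are trying to exploit.

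The missing idea is the placement of the test point $z$ in Theorem \ref{mid}: it should sit at distance exactly $\tfrac12 d(x,y)$ from \emph{both} endpoints, so that the right-hand side $\tfrac12\bigl(d^2(z,x)+d^2(z,y)\bigr)-\tfrac14 d^2(x,y)$ collapses to $0$, while $z$ is displaced from the true midpoint purely in the second factor. This is what the paper does: take $x\ne y\in\Omega_1$ with geodesic midpoint $m$, set $r=\tfrac12 K_{\Omega_1}(x,y)$, choose $w,z\in\Omega_2$ with $K_{\Omega_2}(w,z)=r$, and apply Theorem \ref{mid} to the endpoints $(x,w)$, $(y,w)$ with test point $(m,z)$. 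The geodesic from $(x,w)$ to $(y,w)$ (unique if $\Omega$ were CAT(0)) has midpoint $(m,w)$; the product formula gives $K_{\Omega}((m,z),(x,w))=K_{\Omega}((m,z),(y,w))=r$ and $K_{\Omega}((x,w),(y,w))=2r$, so the inequality forces $K_{\Omega}((m,z),(m,w))\le 0$, contradicting $K_{\Omega}((m,z),(m,w))=K_{\Omega_2}(z,w)=r>0$. Your configurations all placed the test point at distance $2r$ from the endpoints, which is why the factor-of-three slack never disappeared.
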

\begin{proof}
Take $x\neq y\in\Omega_1$ and let $m$ be the midpoint of the geodesic segment from $x$ to $y$ in $(\Omega_1,K_{\Omega_1})$. Then, we can choose $z, \:w\in\Omega_2$ such that
$$
K_{\Omega_2}(z,w)=K_{\Omega_1}(x,m)=K_{\Omega_1}(y,m)=\frac{1}{2}K_{\Omega_1}(x,y),
$$
which implies that
$$
\frac{1}{2}(K_{\Omega}^2((x,w),(m,z))+K_{\Omega}^2((y,w),(m,z)))-\frac{1}{4}K_{\Omega}^2((x,w),(y,w))=0.
$$
Since
$K_{\Omega}((m,w),(m,z))>0$, it follows from Theorem \ref{mid} that $(\Omega,K_{\Omega})$ is not $CAT(0)$.
It completes the proof.
\end{proof}
To prove Theorem $\ref{main}$, we shall need a recent result due to A M. Zimmer.

\begin{thm}[\cite{Zimmer2019subellip}, Theorem 6.1]\label{scaling}
Suppose that $\Omega$ is a $\mathbb{C}$-proper convex domain and every domain in $\overline{Aff(\mathbb{C}^d)\cdot\Omega}\bigcap\mathbb{X}_d$ does not contain any affine disk in the boundary. Then $\Omega$ is locally m-convex for some $m\geq 1$.
\end{thm}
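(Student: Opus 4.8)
The plan is to prove the contrapositive: assuming that $\Omega$ is \emph{not} locally $m$-convex for any $m\geq 1$, I will produce a domain in $\overline{Aff(\mathbb{C}^d)\cdot\Omega}\cap\mathbb{X}_d$ whose boundary contains a nontrivial affine disk, contradicting the hypothesis. Unwinding the negation of the definition of local $m$-convexity, I first fix a radius $R>0$ at which the estimate fails for every exponent, and choose along the diagonal: for each $n\in\mathbb{N}$ there are a point $z_n\in B(0,R)\cap\Omega$ and a unit vector $v_n$ with $\delta_\Omega(z_n,v_n)>n\,\delta_\Omega(z_n)^{1/n}$. Since $\Omega$ contains no complex line, the transverse reach $\delta_\Omega(\cdot,\cdot)$ is finite and locally bounded on $\Omega$; hence the $z_n$ cannot stay a fixed distance from $\partial\Omega$, for otherwise the right-hand side $n\,\delta_\Omega(z_n)^{1/n}\to\infty$ would exceed a bounded left-hand side. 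Thus, after passing to a subsequence, $\delta_\Omega(z_n)\to 0$, $z_n\to\xi\in\partial\Omega$, and $v_n\to v_\infty$. This is the \emph{flattening sequence}: near $z_n$ the domain is far thicker in the $v_n$-direction than any fixed power of its normal thickness.

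Next I renormalize. Let $\xi_n\in\partial\Omega$ realize $\delta_\Omega(z_n)=|z_n-\xi_n|$. Rather than a naive isotropic blow-up at $\xi_n$ (which would only yield a half-space and thus leave $\mathbb{X}_d$), I use an affine-invariant normalization of Frankel type: I choose $A_n\in Aff(\mathbb{C}^d)$ sending $\xi_n$ to the origin, aligning the inward normal and the direction $v_n$ with fixed coordinate axes, and dilating \emph{anisotropically} at the rates dictated by the local geometry, so that $A_n(z_n)$ converges to a fixed interior base point and the normalized domains $A_n\Omega$ form a bounded, nondegenerate family. The point of this normalization is that the family is precompact \emph{inside} $\mathbb{X}_d$: a subsequence of $A_n\Omega$ converges in the local Hausdorff topology to a convex domain $\hat\Omega$ that is again $\mathbb{C}$-proper, so $\hat\Omega\in\overline{Aff(\mathbb{C}^d)\cdot\Omega}\cap\mathbb{X}_d$ and the hypothesis applies to it.

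It then remains to detect the affine disk in $\partial\hat\Omega$. Writing the normalized boundaries near the base point as graphs of convex functions $\psi_n$ over the tangent directions, local Hausdorff convergence gives $\psi_n\to\psi_\infty$ locally uniformly, with $\partial\hat\Omega$ the graph of $\psi_\infty$. The inequality $\delta_\Omega(z_n,v_n)>n\,\delta_\Omega(z_n)^{1/n}$ says precisely that, after dividing by the normal scale, the transverse reach in the $v_n$-direction diverges faster than any power; tracking $\delta$ along the segment $s\mapsto A_n(z_n+sv_n)$ and using convexity, I will show that this forces $\psi_\infty$ to vanish identically on a disk in the $v_\infty$-direction. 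That flat piece is a nontrivial affine disk contained in $\partial\hat\Omega$, giving the desired contradiction.

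The hard part is the interaction between the last two steps. The normalization must be strong enough in the $v_n$-direction that the super-polynomial flatness is not washed out in the limit (so that $\psi_\infty$ genuinely vanishes on a disk), yet controlled enough in every direction that the limit stays $\mathbb{C}$-proper and lands in $\mathbb{X}_d$ --- otherwise the hypothesis, which only constrains domains in $\mathbb{X}_d$, says nothing. Converting the single scalar estimate $\delta_\Omega(z_n,v_n)>n\,\delta_\Omega(z_n)^{1/n}$ into the correct anisotropic dilation rates, and proving that the Frankel-type family is precompact \emph{within} $\mathbb{X}_d$, is where the real work lies; the convexity estimate for the persistence of flatness and the continuity of $\delta$ under local Hausdorff limits are comparatively routine.
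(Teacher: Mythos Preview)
The paper does not prove this statement at all: Theorem~\ref{scaling} is quoted verbatim from Zimmer's paper \cite{Zimmer2019subellip} (Theorem~6.1 there) and is used here strictly as a black box input to Lemma~\ref{scaling2} and the proof of Theorem~\ref{main}. There is therefore nothing in the present paper to compare your proposal against.

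That said, your sketch is broadly aligned with the actual strategy in Zimmer's original proof: argue by contradiction, extract a sequence witnessing failure of every polynomial estimate, renormalize by affine maps of Frankel type so that the limit stays in $\mathbb{X}_d$, and read off an affine disk in the boundary of the limit. You have also correctly identified where the genuine difficulty sits, namely choosing the anisotropic dilation rates so that the limit is simultaneously $\mathbb{C}$-proper (so the hypothesis applies) and still records the super-polynomial flatness in the $v_\infty$-direction. What you have written is an outline, not a proof: the existence of the Frankel-type normalization with the required precompactness in $\mathbb{X}_d$, and the passage from the scalar inequality $\delta_\Omega(z_n,v_n)>n\,\delta_\Omega(z_n)^{1/n}$ to vanishing of the limiting graph function on a full disk, both require substantial additional argument that you have only named. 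If your goal is to reproduce Zimmer's result, you should consult \cite{Zimmer2019subellip} directly; if your goal is only to follow the present paper, you may simply cite Theorem~\ref{scaling} and move on.
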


The above theorem shows that: if $\Omega$ is not m-convex, then by scaling we can find an affine disk in the boundary. By using the above theorem, the next Lemma is obvious.

\begin{lemma}\label{scaling2}
Let $\Omega$ be a $\mathbb{C}$-proper convex domain. If $\Omega$ is not locally $m$-convex for any $m\in\mathbb{N}$, then there exists ${A_n}\in Aff(\mathbb{C}^d)$ such that $A_n\Omega\rightarrow \widehat{\Omega}$ and $\widehat{\Omega}\supseteq C(\alpha,\beta)\times\Delta\times\{\vec{0}\}$, where $C(\alpha,\beta)=\{z\in\mathbb{C}:\arg z\in(\alpha,\beta)\}$ is a convex cone.
\end{lemma}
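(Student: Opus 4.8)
The plan is to deduce the Lemma from Theorem \ref{scaling} by a two‑step rescaling followed by a diagonal argument. Since $\Omega$ is $\mathbb C$-proper and fails to be locally $m$-convex for every $m$, the contrapositive of Theorem \ref{scaling} produces a domain $\widehat\Omega_0\in\overline{Aff(\mathbb C^d)\cdot\Omega}\cap\mathbb X_d$ whose boundary contains an affine disk; thus there are $B_n\in Aff(\mathbb C^d)$ with $B_n\Omega\to\widehat\Omega_0$ together with a non‑constant complex‑affine map whose restriction to $\Delta$ lands in $\partial\widehat\Omega_0$. I would first apply an affine change of coordinates $T$ normalizing $\widehat\Omega_0$ so that the disk is exactly $\{0\}\times\Delta\times\{\vec 0\}$ and $\widehat\Omega_0\subset\{\mathrm{Re}\,z_1<0\}$: a real supporting functional of $\widehat\Omega_0$ at the center of the disk restricts on the disk to an $\mathbb R$-affine function that is $\le 0$ and vanishes at the center, hence vanishes on the entire complex line carrying the disk, so it does not involve the ``disk coordinate'', and a complex‑linear change of the remaining coordinates turns it into $\mathrm{Re}\,z_1$. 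After a further shear $(z_1,z_2,z')\mapsto(z_1,\,z_2-\tfrac{q_2}{q_1}z_1,\,z'-\tfrac{q'}{q_1}z_1)$ applied to an interior point $q$ (which fixes both the disk and the half‑space, and where $q_1\neq 0$ because $\mathrm{Re}\,q_1<0$), I may also assume $(a,\vec 0,\vec 0)\in\widehat\Omega_0$ for some $a$ with $\mathrm{Re}\,a<0$.

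Next comes the zoom‑out. Fix $\delta>0$ with $B_{euc}((a,\vec 0,\vec 0),\delta)\subset\widehat\Omega_0$; this ball lies in $\{\mathrm{Re}\,z_1<0\}$, so $\delta<|\mathrm{Re}\,a|$. By convexity the open set $U$ of all points $(1-t)b+td$ with $t\in[0,1)$, $b\in B_{euc}((a,\vec 0,\vec 0),\delta)$ and $d\in\{0\}\times\Delta\times\{\vec 0\}$ lies in $\widehat\Omega_0$. I would then dilate by $\Lambda_m(z_1,z_2,z')=(mz_1,z_2,mz')$ and pass to a subsequence so that $\Lambda_m\widehat\Omega_0$ converges in the local Hausdorff topology to a convex domain $\widehat\Omega$ (nonempty, since one checks from the parametrization of $U$ that the $\Lambda_m\widehat\Omega_0$ eventually contain a fixed ball). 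A direct estimate on the parametrization of $U$ shows that, with $\alpha=\arg a-\arcsin(\delta/|a|)$ and $\beta=\arg a+\arcsin(\delta/|a|)$, every point $(w,\xi,u')$ with $w\in C(\alpha,\beta)$, $\xi\in\Delta$ and $|u'|$ small belongs to $\Lambda_m U\subset\Lambda_m\widehat\Omega_0$ for all $m$ large enough (depending on the point). Since $d_H^{(R)}(\overline{\Lambda_m\widehat\Omega_0},\overline{\widehat\Omega})\to 0$ for every $R$, every compact subset of this open set lies in $\overline{\widehat\Omega}$, hence in $\mathrm{int}\,\overline{\widehat\Omega}=\widehat\Omega$; in particular $\widehat\Omega\supseteq C(\alpha,\beta)\times\Delta\times\{\vec 0\}$. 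Because $0<\arcsin(\delta/|a|)<\pi/2$ (again using $\delta<|\mathrm{Re}\,a|$), $C(\alpha,\beta)$ is a genuine proper convex cone, lying in $\{\mathrm{Re}\,z_1<0\}$.

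Finally, to realize $\widehat\Omega$ as a limit of affine images of $\Omega$: a fixed affine map preserves local Hausdorff convergence, so $\Lambda_m T B_n\Omega\xrightarrow{n\to\infty}\Lambda_m T\widehat\Omega_0\xrightarrow{m\to\infty}\widehat\Omega$, where $T\widehat\Omega_0$ denotes the normalized domain of the first step. The local Hausdorff topology is metrizable, so a standard diagonal argument yields indices $n(m)$ such that $A_m:=\Lambda_m\circ T\circ B_{n(m)}\in Aff(\mathbb C^d)$ satisfies $A_m\Omega\to\widehat\Omega$, which completes the proof.

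The delicate point is the middle step: one must arrange the rescaling so that the limit genuinely \emph{contains} the open product $C(\alpha,\beta)\times\Delta\times\{\vec 0\}$, rather than merely having it in its closure, and this dictates the two choices made above. The positive angular width $\beta-\alpha$ is not produced by a single segment joining an interior point to the disk (which would only give a ray) but by letting the base point range over a full Euclidean ball; and one must dilate the $z'$-coordinates together with $z_1$, since otherwise the available room around the limiting points collapses in the $z'$-directions and only membership in the closure survives. Checking that $\Lambda_m U$ sweeps out a fixed neighborhood of each target point for all large $m$ is the technical heart of the argument.
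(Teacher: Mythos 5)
Your proposal is correct and follows the same overall strategy as the paper: invoke Theorem \ref{scaling} in contrapositive form to obtain a domain in $\overline{Aff(\mathbb{C}^d)\cdot\Omega}\cap\mathbb{X}_d$ with an affine disk in its boundary, normalize so that the disk is $\{0\}\times\Delta\times\{\vec{0}\}$ and the domain lies in a half-space, then rescale and pass to a further local Hausdorff limit (the paper suppresses your final diagonal step, but it is needed and standard). Where you differ is in the execution of the rescaling. The paper dilates only the $z_1$-coordinate and identifies the cone as the tangent cone $\bigcup_{t>0}t\,(\Omega\cap(\mathbb{C}\times\{\vec{0}\}))$ of the one-dimensional slice; the factor $\Delta$ is then absorbed by convexity, since for $w\in C(\alpha,\beta)$ and $\xi\in\Delta$ one writes $(w,\xi,\vec{0})=(1-t)\bigl(\tfrac{w}{1-t},0,\vec{0}\bigr)+t\bigl(0,\tfrac{\xi}{t},\vec{0}\bigr)$ with $t\in(|\xi|,1)$, a convex combination of points of $\overline{\widehat{\Omega}}$, and in dimension $2$ the resulting product $C(\alpha,\beta)\times\Delta$ is open in $\mathbb{C}^2$, hence lies in $\operatorname{int}\overline{\widehat{\Omega}}=\widehat{\Omega}$. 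You instead dilate $z_1$ together with the transverse coordinates $z'$ and generate the cone from a Euclidean ball about an interior point, which hands you full-dimensional neighborhoods of every target point directly. Your parenthetical claim that dilating $z_1$ alone leaves the product only in the closure is therefore not right for $d=2$, but it is a legitimate concern for $d\geq3$, where $C(\alpha,\beta)\times\Delta\times\{\vec{0}\}$ is not open in $\mathbb{C}^d$ and the paper's one-line argument does not address the passage from closure to membership; your version treats all $d$ uniformly. Your proof also makes explicit two points the paper glosses over: that the slice $\Omega\cap(\mathbb{C}\times\{\vec{0}\})$ can be arranged to be nonempty (your shear through an interior point), and that the supporting functional at the disk can be taken independent of the disk coordinate. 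What the paper's route buys is brevity and a canonical description of the cone as the tangent cone of the slice; what yours buys is a complete argument in all dimensions at the cost of a more technical sweeping estimate.
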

\begin{proof}
In view of Theorem $\ref{scaling}$, we may assume that $\{0\}\times\Delta\times\{\vec{0}\}\subset\partial\Omega$, and
$$
\Omega\subset\{(z_1,...,z_d):Im z_1>0\}.
$$
Writing
$$A_n(z)=\left(
 \begin{array}{cc}
 n & 0 \\
 0& I_{d-1}\\
 \end{array}
 \right),
$$
we obtain
$$
A_n(\Omega\cap\mathbb{C}\times\{\vec{0}\})=C(\alpha,\beta)\times\{\vec{0}\},
$$
where $C(\alpha,\beta)=\bigcup\limits_{t>0}t(\Omega\cap\mathbb{C}\times\{\vec{0}\})$.
\end{proof}
\begin{lemma}\label{isom}
Let $\Omega\subset \mathbb{C}^d$ be a $\mathbb{C}$-proper convex domain. Suppose that $P:\mathbb{C}^d\rightarrow \mathbb{C}$ is the projection map $P(z_1,...z_d)=z_1$. If $\Omega\cap (\mathbb{C}\times\{\vec{0}\})=U\times\{\vec{0}\}$ and $P(\Omega)=U$, then the map $F:U\rightarrow \Omega$ given by $F(z)=(z,\vec{0})$ induces an isometric embeddding $(U,K_{U})\rightarrow (\Omega,K_{\Omega})$.
\end{lemma}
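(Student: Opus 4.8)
The plan is to realize $F$ as a holomorphic section of the coordinate projection $P$, and then to apply the distance-decreasing property of the Kobayashi pseudodistance under holomorphic maps (recalled above) in both directions; once that is set up, the equality $K_\Omega\bigl(F(z),F(w)\bigr)=K_U(z,w)$ follows formally.

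First I would check that the two hypotheses put $F$ and $P$ in the right setting. The map $F(z)=(z,\vec 0)$ is affine, hence holomorphic on $\mathbb C$, and the assumption $\Omega\cap(\mathbb C\times\{\vec 0\})=U\times\{\vec 0\}$ guarantees that $F(U)=U\times\{\vec 0\}\subseteq\Omega$, so $F$ is a holomorphic map $U\to\Omega$. Likewise $P$ is linear, hence holomorphic, and the assumption $P(\Omega)=U$ guarantees that the restriction $P\big|_{\Omega}$ is a holomorphic map $\Omega\to U$. By construction $P\circ F=\operatorname{id}_U$.

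Now apply the contraction property. From $F\colon U\to\Omega$ one gets $K_\Omega\bigl(F(z),F(w)\bigr)\le K_U(z,w)$ for all $z,w\in U$. From $P\big|_{\Omega}\colon\Omega\to U$ one gets $K_U\bigl(P(x),P(y)\bigr)\le K_\Omega(x,y)$ for all $x,y\in\Omega$; specializing to $x=F(z)$, $y=F(w)$ and using $P\circ F=\operatorname{id}_U$ gives $K_U(z,w)\le K_\Omega\bigl(F(z),F(w)\bigr)$. Combining the two inequalities yields $K_\Omega\bigl(F(z),F(w)\bigr)=K_U(z,w)$ for all $z,w\in U$, that is, $F$ induces an isometric embedding of $(U,K_U)$ into $(\Omega,K_\Omega)$.

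There is no genuine obstacle here: the argument is the standard principle that a holomorphic map admitting a holomorphic left inverse is automatically a Kobayashi isometry onto its image, and the only points needing a word are the bookkeeping of domains and codomains in the previous paragraph. One may additionally remark that $U$ is itself a $\mathbb C$-proper convex domain — if $U$ were all of $\mathbb C$, then $\mathbb C\times\{\vec 0\}\subseteq\Omega$ would contradict the $\mathbb C$-properness of $\Omega$ — so that $K_U$ is an honest distance; but this observation is not even required for the stated equality.
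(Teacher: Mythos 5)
Your proof is correct and is essentially identical to the paper's: both realize $F$ as a holomorphic section of the projection $P$ and apply the distance-decreasing property in both directions, using $P\circ F=\operatorname{id}_U$ to reverse the inequality. The extra bookkeeping you include (checking domains and codomains, and the $\mathbb{C}$-properness of $U$) is a harmless elaboration of what the paper leaves implicit.
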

\begin{proof}
Since both $F$ and $P$ are holomorphic maps, from the distance decreasing property of the Kobayashi metrics, it follows that
$$
K_{\Omega}(F(z_1),F(z_2))\leq K_{U}(z_1,z_2).
$$
Noting that $P\circ F=id$, we thus have
$$
K_{U}(z_1,z_2)\leq K_{\Omega}(F(z_1),F(z_2)).
$$
\end{proof}

Now we are ready to prove Theorem $\ref{main}$
\begin{proof}
Part {\bf (1)}. We shall first prove the theorem when $\Omega\subset\mathbb{C}^2$ is a $\mathbb{C}$-proper convex domain.

Assume, by contradiction, that $\Omega$ is not locally $m$-convex. From Lemma $\ref{scaling2}$, it follows that there exists $A_n\in Aff(\mathbb{C}^d)$ such that $\Omega_n:= A_n\Omega\rightarrow \widehat{\Omega}$ and $\widehat{\Omega}\supseteq C(\alpha,\beta)\times\Delta$ and $\widehat{\Omega}\cap\mathbb{C}=C(\alpha,\beta)$.

We claim that $P(\widehat{\Omega})=C(\alpha,\beta)$, where $P(z_1,z_2)=z_1$ is the projection map.
Suppose that it is not the case. Take $p=(z,\omega)\in\widehat{\Omega}$, where $z$ is not contained in $C(\alpha,\beta)$ and $\omega=|\omega|\: e^{i\theta}$. And take $q=(\xi,-e^{i\theta})$ where $Im\:\xi=Im z$ and $\xi$ lies in the boundary of $C(\alpha,\beta)$ such that $Re \xi \cdot Re z>0$.
Since $\widehat{\Omega}$ is also convex, it implies that
$$
\{tp+(1-t)q:t\in(0,1)\}\subset\widehat{\Omega}.
$$
By taking $t=\frac{1}{|\omega|+1}$, we obtain that $tz+(1-t)\xi\in\widehat{\Omega}$, which contradicts with the fact that $\widehat{\Omega}\cap\mathbb{C}=C(\alpha,\beta)$.
%b.when $d>2$, since $\Omega$ has smooth boundary $C(\alpha,\beta)=\mathbb{H}$, and $\widehat{\Omega}\subseteq\{(z_1,...,z_d:Im z_1>0)\}$, so $P(\widehat{\Omega})=C(\alpha,\beta)$.

Then, by using Lemma $\ref{isom}$, it follows that the map $f:C(\alpha,\beta)\rightarrow\widehat{\Omega}$ given by $f(z)=(z,0)$ induces an isometric embeddding
$$
(C(\alpha,\beta),K_{C(\alpha,\beta)})\rightarrow (\widehat{\Omega},K_{\widehat{\Omega}}).
$$
Now we choose $x,y\in C(\alpha,\beta)$ and let $m$ be the midpoint of the geodesic segment from $x$ to $y$ in the metric space $(C(\alpha,\beta),K_{C(\alpha,\beta)})$. Since $f$ is isometric, $m$ is also the midpoint of the geodesic segment from $x$ to $y$ in metric space $(\widehat{\Omega},K_{\widehat{\Omega}})$.
Therefore, we can take $z\in\Delta$ such that
$$
K_{\Delta}(0,z)=K_{C(\alpha,\beta)}(x,m)=K_{C(\alpha,\beta)}(m,y)=\frac{1}{2}K_{C(\alpha,\beta)}(x,y).
$$
Denote $C=C(\alpha,\beta)\times\Delta$, $\hat{x}=(x,0)$, $\hat{y}=(y,0)$, $\hat{m}=(m,0)$ and $\hat{z}=(0,z)$.

Since $\widehat{\Omega}\supseteq C$, it follows that $$K_{C}(\hat{x},\hat{m})\geq K_{\widehat{\Omega}}(\hat{x},\hat{m}),$$ and $$K_{C}(\hat{y},\hat{m})\geq K_{\widehat{\Omega}}(\hat{y},\hat{m}).$$
Therefore,
\begin{align*}
&\frac{1}{2}(K_{\widehat{\Omega}}^2(\hat{x},\hat{m})+K_{\widehat{\Omega}}^2(\hat{y},\hat{m}))
-\frac{1}{4}K_{\widehat{\Omega}}^2(\hat{x},\hat{y})\\
\leq &\frac{1}{2}(K_{C}^2(\hat{x},\hat{z})+K_{C}^2(\hat{y},\hat{z}))-
\frac{1}{4}K_C^2(\hat{x},\hat{y})\\
=&0.
\end{align*}

Choose $x_n,y_n,z_n\in\Omega$ such that $A_nx_n\rightarrow\hat{x}$, \:$A_ny_n\rightarrow\hat{y}$ and $A_nz_n\rightarrow\hat{z}$. Now Theorem $\ref{hausdorff}$ gives
$$
K_{\widehat{\Omega}}(\hat{x},\hat{y})=\lim\limits_{n\rightarrow\infty}K_{\Omega_n}(A_nx_n,A_ny_n),
$$
and
$$
K_{\widehat{\Omega}}(\hat{x},\hat{z})=\lim\limits_{n\rightarrow\infty}K_{\Omega_n}(A_nx_n,A_nz_n),
$$
and
$$
K_{\widehat{\Omega}}(\hat{y},\hat{z})=\lim\limits_{n\rightarrow\infty}K_{\Omega_n}(A_ny_n,A_nz_n).
$$
Let $m_n$ be the midpoint of the geodesic segment from $A_nx_n$ to $A_ny_n$ in $(\Omega_n,K_{\Omega_n})$.\\
Then, by choosing a subsequence (still denoted by $m_n$), we may suppose that $m_n\rightarrow \hat{m}\in\widehat{\Omega}\cup\{\infty\}$.
Then either $\hat{m}\neq\hat{z}$ or $\hat{m}\neq\check{z}$, where $\check{z}=(m,iz)$.

Since $K_{\Delta}(0,z)=K_{\Delta}(0,iz)$,
the equalities
$K_{C}(\hat{x},\hat{z})=K_{}(\hat{x},\check{z})$ and
$K_{C}(\hat{y},\hat{z})=K_{C}(\hat{y},\check{z})$ follow.
We have thus proved that
\begin{align*}
&\frac{1}{2}(K_{\widehat{\Omega}}^2(\hat{x},\check{z})+K_{\widehat{\Omega}}^2(\hat{y},\check{z}))
-\frac{1}{4}K_{\widehat{\Omega}}^2(\hat{x},\hat{y})\\
&\leq\frac{1}{2}(K_{C}^2(\hat{x},\check{z})+K_{C}^2(\hat{y},\check{z}))-
\frac{1}{4}K_{C}^2(\hat{x},\hat{y})\\
&=0.
\end{align*}
Therefore, we deduce that: $\forall\epsilon>0$, there exists $N\in\mathbb{N}$ such that $\forall n>N$
$$
\frac{1}{2}(K_{\Omega_n}^2(\hat{x},\hat{z})+K_{\Omega_n}^2(\hat{y},\hat{z}))
-\frac{1}{4}K_{\Omega_n}^2(\hat{x},\hat{y})\leq \epsilon,
$$
and
$$
\frac{1}{2}(K_{\Omega_n}^2(\hat{x},\check{z})+K_{\Omega_n}^2(\hat{y},\check{z}))
-\frac{1}{4}K_{\Omega_n}^2(\hat{x},\hat{y})\leq \epsilon.
$$
Combining with the fact that $\hat{m}\neq\hat{z}$ or $\hat{m}\neq\check{z}$, we have thus proved that there exists $\delta>0$ such that one of $K_{\Omega_n}(m_n,\hat{z})$ and $K_{\Omega_n}(m_n,\check{z})$ is strictly bigger than $\delta$.
Therefore, in terms of the definition of $CAT(0)$ spaces, by choosing $\epsilon$ small enough, we complete the proof.

\bigskip
Part {\bf (2)}. \:Next we prove the result for the general case  that $\Omega \subset \mathbb C^d, \: d\geq3$, is a bounded convex domain with smooth boundary. The difference is that when $d\geq 3$, the claim $P(\widehat{\Omega})=C(\alpha,\beta)$ may be not correct without the further smoothness assumption on the boundary.

We will use the proof of Proposition 6.1 in \cite{Zimmer2016Gromov}. For the sake of completeness, we present its proof here.

Suppose $\vec{0}\in\partial\Omega$ and
$$
\Omega \cap \mathcal{O}=\left\{\vec{z} \in \mathcal{O}: \operatorname{Im}\left(z_{1}\right)>f\left(\operatorname{Re}\left(z_{1}\right), z_{2}, \ldots, z_{d}\right)\right\},
$$
where $\mathcal{O}$ is a neighborhood of the origin and $f: \mathbb{R} \times \mathbb{C}^{d-1} \rightarrow \mathbb{R}$ is a smooth convex non-negative function. Assuming that $\vec{0}$ has infinite line type, by changing the coordinates if necessary, we have
$$
\lim _{z \rightarrow 0} \frac{f(0, z, 0, \ldots, 0)}{|z|^{n}}=0.
$$
Then there are two cases (a) (b): \\
(a). If $\partial\Omega$ contains an affine disk at $\vec{0}$, without losing of generality we assume that $\vec{0}\times\Delta\times\{\vec{0}\}\subset\partial\Omega$. By taking
$$A_n(z)=\left(
 \begin{array}{cc}
 n & 0 \\
 0& I_{d-1}\\
 \end{array}
 \right),
$$
we deduce that $A_n(\Omega)\rightarrow\widehat{\Omega}$, and
$$
\mathbb{H}\times\Delta\times\{\vec{0}\}\subset\widehat{\Omega},
$$
where $\mathbb{H}$ is the upper half plane.

Since $\widehat{\Omega}\subset\{z\in\mathbb{C}^d:Im z_1>0\}$, by considering the projection
$P:\mathbb{C}^d\rightarrow\mathbb{C}^1$, $P(z_1,...,z_d)=z_1$, we obtain
$$
P(\widehat{\Omega})=\mathbb{H}.
$$
Therefore, the map $f:\mathbb{H}\rightarrow\widehat{\Omega}$ given by $f(z)=(z,\vec{0})$ induces an isometric embeddding $(\mathbb{H},K_{\mathbb{H}})\rightarrow (\widehat{\Omega},K_{\widehat{\Omega}})$.

Then by repeated use of the proof of Part (1), we deduce that $\Omega$ is not CAT(0).\\
(b). Assume that $\partial\Omega$ does not contain any affine disks at $\{\vec{0}\}$. Similarly we only need to check that $\mathbb{H}\times\Delta\times\{\vec{0}\}\subset\widehat{\Omega}$.

\bigskip
The proof of the theorem could be simplified if we use the following lemma.
\begin{lemma}[\cite{frankel1991applications}, Theorem 9.3]\label{frankel}\label{frankel}
Suppose $\Omega \subset \mathbb{C}^{d}$ is a $\mathbb{C}$-proper convex open set.
Suppose that $V \subset \mathbb{C}^{d}$ is a complex affine subspace intersecting $\Omega$ and $\{A_{n} \in Aff (V)\}$ is a sequence
of affine maps such that $A_{n}(\Omega \cap V)$ converges in the local Hausdorff topology to a
$\mathbb{C}$-proper convex open set $\widehat{\Omega}_{V} \subset V$. Then there exists affine maps $B_{n} \in$ Aff $\left(\mathbb{C}^{d}\right)$ such
that $B_{n} \Omega$ converges in the local Hausdorff topology to a $\mathbb{C}$-proper convex open set
$\widehat{\Omega}$ with $\widehat{\Omega} \cap V=\widehat{\Omega}_{V}$.
\end{lemma}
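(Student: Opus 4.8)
The plan is to reduce, by a fixed complex-linear change of coordinates, to the case $V=\mathbb{C}^{k}\times\{\vec 0\}\subset\mathbb{C}^{d}$, and then to look for $B_{n}$ that preserve $V$ and restrict to $A_{n}$ on it. Writing $A_{n}(v)=M_{n}v+b_{n}$ with $M_{n}\in GL_{k}(\mathbb{C})$ and $b_{n}\in\mathbb{C}^{k}$, any map of the shape
$$B_{n}(v,w)=\bigl(M_{n}v+b_{n},\,C_{n}w\bigr),\qquad C_{n}\in GL_{d-k}(\mathbb{C}),$$
lies in $Aff(\mathbb{C}^{d})$, satisfies $B_{n}(V)=V$ and $B_{n}|_{V}=A_{n}$, whence $B_{n}(\Omega)\cap V=B_{n}(\Omega\cap V)=A_{n}(\Omega\cap V)\to\widehat{\Omega}_{V}$. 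So the whole problem is to pick the $C_{n}$ so that, after passing to a subsequence (which we relabel), $B_{n}(\Omega)$ converges in the local Hausdorff topology to a $\mathbb{C}$-proper convex \emph{open} set $\widehat{\Omega}$. Indeed, once $\widehat{\Omega}$ is open the equality $\widehat{\Omega}\cap V=\widehat{\Omega}_{V}$ is automatic: for convex open sets, local Hausdorff convergence commutes with slicing by the fixed subspace $V$ as soon as the limit meets $V$ in a non-empty relatively open set, so $\widehat{\Omega}\cap V=\lim_{n}\bigl(B_{n}(\Omega)\cap V\bigr)=\widehat{\Omega}_{V}$.

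To choose the $C_{n}$, fix $p_{0}\in\widehat{\Omega}_{V}$ and $\varepsilon>0$ with $\overline{B_{euc}(p_{0},\varepsilon)}\subset\widehat{\Omega}_{V}$; for all large $n$ one has $\overline{B_{euc}(p_{0},\varepsilon)}\subset A_{n}(\Omega\cap V)$, so $E_{n}:=A_{n}^{-1}\bigl(\overline{B_{euc}(p_{0},\varepsilon)}\bigr)$ is a compact convex subset of $\Omega\cap V$. I would normalize the transverse directions by a John-ellipsoid argument carried out on an exhaustion by balls: for each $R>0$ let $\mathcal{E}_{n,R}$ be the John ellipsoid of the bounded convex body $\bigl(E_{n}\times\mathbb{C}^{d-k}\bigr)\cap\Omega\cap B_{euc}(0,R)$, choose $C_{n}=C_{n,R}$ so that the $\mathbb{C}^{d-k}$-component of $\mathcal{E}_{n,R}$ becomes the unit ball, and then diagonalize in $R$ to obtain a single sequence $C_{n}$. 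Because $A_{n}$ already normalizes $\Omega\cap V$, this choice makes the sets $B_{n}(\Omega)$ contain a fixed box $B_{euc}(p_{0},\varepsilon)\times\{|w|<1\}$ — so they neither escape to infinity nor collapse to a lower-dimensional set — while over every bounded region they remain inside a controlled enlargement of that box, so that no complex direction gets infinitely flattened.

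Finally, by a Blaschke-type selection theorem in the local Hausdorff topology one passes to a subsequence with $B_{n}(\Omega)\to\widehat{\Omega}$; the lower bound makes $\widehat{\Omega}$ a non-empty open convex set, and the two-sided John control forces the transverse slices of $\widehat{\Omega}$ to stay bounded, which together with the $\mathbb{C}$-properness of $\widehat{\Omega}\cap V=\widehat{\Omega}_{V}$ rules out any complex affine line in $\widehat{\Omega}$; hence $\widehat{\Omega}$ is $\mathbb{C}$-proper, and by the first paragraph $\widehat{\Omega}\cap V=\widehat{\Omega}_{V}$, as required. (If one prefers a metric formulation of the selection step, Theorem \ref{hausdorff} recasts it as convergence of the Kobayashi metrics of the $B_{n}(\Omega)$.) The main obstacle is precisely the $\mathbb{C}$-properness of the limit: a local Hausdorff limit of $\mathbb{C}$-proper convex domains need not be $\mathbb{C}$-proper — strips degenerate to half-planes and then to all of $\mathbb{C}$ — so the transverse normalization cannot be a mere dilation, and making the John-ellipsoid control uniform along the exhaustion by balls is the technical heart of the argument, which is where Frankel's original proof does the real work.
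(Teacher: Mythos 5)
First, a point of comparison: the paper does not prove this statement at all --- it is imported verbatim as \cite{frankel1991applications}, Theorem 9.3, and used as a black box. So your proposal cannot be matched against an in-paper argument; it has to stand on its own, and as it stands it has genuine gaps rather than being a complete proof.

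The most concrete problem is your deduction of $\mathbb{C}$-properness of the limit. You argue that bounded transverse slices together with $\mathbb{C}$-properness of $\widehat{\Omega}\cap V=\widehat{\Omega}_{V}$ rule out complex affine lines in $\widehat{\Omega}$. This implication is false: take $d=2$, $V=\mathbb{C}\times\{0\}$ and $\widehat{\Omega}=\{(z_{1},z_{2}):|z_{2}-z_{1}|<1\}$. Every fiber over a point of $V$ is a unit disk, and $\widehat{\Omega}\cap V=\Delta$ is $\mathbb{C}$-proper, yet $\widehat{\Omega}$ contains the complex line $z_{1}=z_{2}$. Degeneration in such ``diagonal'' directions is exactly what must be excluded, and neither the slice condition nor fiberwise boundedness sees it. Relatedly, your block-diagonal ansatz $B_{n}(v,w)=(A_{n}v+b_{n},C_{n}w)$ is probably too rigid: the general extension preserving $V$ and restricting to $A_{n}$ is block-triangular, $(v,w)\mapsto(A_{n}v+D_{n}w+b_{n},C_{n}w)$, and the shear $D_{n}$ is typically needed to recenter fibers that drift as one moves transversally --- a purely linear $C_{n}$ acting on $w$ cannot do this. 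Finally, the normalization step itself is not executed: the John ellipsoid of a real convex body is a real ellipsoid with no well-defined ``$\mathbb{C}^{d-k}$-component,'' it need not be normalizable by a complex-linear $C_{n}$, and the diagonal extraction over the exhaustion radius $R$ requires a compatibility between the choices $C_{n,R}$ for different $R$ that you do not establish. You candidly flag this uniformity as ``where Frankel's original proof does the real work,'' which is an accurate self-assessment: the proposal is a reasonable road map (extend, normalize transversally, extract a limit, check openness, properness, and the slice identity), but the two load-bearing steps --- the correct choice of transverse normalization including shears, and the exclusion of all complex lines in the limit --- are precisely the ones left unproved or proved incorrectly.
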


Now suppose $V=\mathbb{C}^2\times\{\vec{0}\}$ and $\Omega_{V}=\Omega\cap V$.
Let $G,\: W \subset \mathbb{R}$ and $U \subset \mathbb{C}$ be neighborhoods of 0 such that $f: G \times U \rightarrow W$ and
$$
\Omega_{V} \cap \mathcal{O}=\{(x+i y, z): x \in G, z \in U, y>f(x, z)\},
$$
where $\mathcal{O}=(G+i W) \times U$. By rescaling we may assume that $B_{1}(0) \subset U$.
We can find $a_{n} \rightarrow 0$ and $z_{n} \in B_{1}(0)$ such that $f\left(0, z_{n}\right)=a_{n}\left|z_{n}\right|^{n}$ and $f(0, w) \leq a_{n}|w|^{n}$ for all $w \in \mathbb{C}$
with $|w| \leq\left|z_{n}\right|$.

By the hypothesis that $\partial \Omega_{V}$ has no non-trivial complex affine disks, we obtain that $z_{n} \rightarrow 0$ and hence
$f\left(0, z_{n}\right) \rightarrow 0$. Passing to a subsequence we may assume that $\left|f\left(0, z_{n}\right)\right|<1 .$
Consider the sequence of linear transformations
$$
A_{n}=\left(\begin{array}{cc}{\frac{1}{f\left(0, z_{n}\right)}} & {0} \\ {0} & {z_{n}^{-1}}\end{array}\right), \quad n=1,2,\cdots,
$$
and $\Omega_{V_{n}}=A_{n}\Omega_{V}\rightarrow\widehat{\Omega}_{V}$. Therefore,
$$
\Omega_{V_{n}} \cap \mathcal{O}_{n}=\left\{(x+i y, z): x \in G_{n}, z \in U_{n}, y>f_{n}(x, z)\right\},
$$
where $G_{n}=f\left(0, z_{n} \right)^{-1} G, \: U_{n}=z_{n}^{-1} U$, and $\mathcal{O}_{n}=A_{n} \mathcal{O}$, and
$$
f_{n}(x, z)=\frac{1}{f\left(0, z_{n}\right)} f\left(f\left(0, z_{n}\right) x, z_{n} z\right).
$$
For $|w| < 1$, we then have
$$
f_{n}(0, w)=\frac{f\left(0, z_{n} w\right)}{f\left(0, z_{n}\right)} \leq \frac{a_{n}\left|z_{n}\right|^{n}|w|^{n}}{f\left(0, z_{n}\right)}=|w|^{n},
$$
which implies that
$$
\{0\}\times\Delta \subset \partial \widehat{\Omega}_{V}.
$$
By using
$$
\Omega_{V_{n}} \cap(\mathbb{C} \times\{0\})=\frac{1}{f\left(0, z_{n}\right)}(\Omega_{V} \cap(\mathbb{C} \times\{0\})),
$$
and $f\left(0, z_{n}\right)\rightarrow 0$,
we have $\mathbb{H}\times\{0\}\subset\widehat{\Omega}_{V}$.
Since $\widehat{\Omega}_{V}$ is convex, $\mathbb{H}\times\Delta\subset\widehat{\Omega}_{V}$ is valid.
It follows immediately from Lemma $\ref{frankel}$ that there exists $B_{n}\in Aff(\mathbb{C}^d)$ such that $B_{n}\Omega\rightarrow\widehat{\Omega}$ and $\mathbb{H}\times\Delta\times\{\vec{0}\}\subset\widehat{\Omega}$, which completes the proof.
\end{proof}

\bigskip
It's natural to ask whether the $m$-convexity is a sufficient condition for bounded convex domains being CAT(0). However, the following example shows that $m$-convexity does not imply CAT(0) in general.
\begin{exam}[\cite{Zimmer2019subellip}, \:Example 7.3]
Let $\Omega_{1}, \Omega_{2}$ be bounded strongly convex domains in $\mathbb{C}^2$ with $C^{\infty}$ boundary. Furthermore, we assume $0 \in \partial \Omega_{j}$, and the real hyperplane
$$
\left\{\left(z_{1}, z_{2}\right) \in \mathbb{C}^{2}: \operatorname{Re}\left(z_{j}\right)=0\right\}
$$
is tangent to $\Omega_{j}$ at $0$, and
$$
{\Omega_{j} \subset\left\{\left(z_{1}, z_{2}\right) \in \mathbb{C}^{2}: \operatorname{Re}\left(z_{j}\right)>0\right\}}.
$$
Define $\Omega=\Omega_{1}\cap\Omega_{2}$.
\end{exam}
Since each $\Omega_{j}$ has smooth boundary, we see that $(\epsilon, \epsilon) \in \Omega$ for $\epsilon>0$ sufficiently small. So $\Omega$ is non-empty. Furthermore, since each $\Omega_{j}$ is strongly convex, it follows that with a constant $C>0$
$$
\delta_{\Omega_{j}}(z ; v) \leq C \delta_{\Omega_{j}}(z)^{1 / 2}
$$
for all $1 \leq j \leq 2, \:z \in \Omega_{j},$ and non-zero $v \in \mathbb{C}^{2}$. Then, for $z \in \Omega$ and non-zero $v \in \mathbb{C}^{2}$, we have
$$
\delta_{\Omega}(z ; v)=\min _{1 \leq j \leq 2} \delta_{\Omega_{j}}(z ; v) \leq \min _{1 \leq j \leq 2} C \delta_{\Omega_{j}}(z)^{1 / 2}=C \delta_{\Omega}(z)^{1 / 2},
$$
from which we deduce that $\Omega$ is $2$-convex. However the set of domains $\{n\cdot\Omega\}$ converges in the local Hausdorff topology to the domain
$$
D=\left\{\left(z_{1}, z_{2}\right) \in \mathbb{C}^{2}: \operatorname{Re}\left(z_{1}\right)>0, \: \operatorname{Re}\left(z_{2}\right)>0\right\}.
$$
Thus $\partial D$ contains an affine disk.

Then, by repeated use of the proof of Theorem $\ref{main}$, we obtain that $(\Omega,K_{\Omega})$ is not CAT(0).

\bigskip
\bibliography{reference}
\bibliographystyle{plain}{}
\end{document}